\newenvironment{tablehere}{\def\@captype{table}}{}
\newenvironment{figurehere}{\def\@captype{figure}}{}
\renewenvironment{itemize}{ 
	\begin{list}{\alph{enumi})}{\setlength{\labelwidth}{1em}\setlength{\itemsep}{0em}\setlength{\parsep}{0em}}\usecounter{enumi}
}{
	\end{list}
}
\renewenvironment{enumerate}{ 
	\begin{list}{\arabic{enumii}.}{\setlength{\labelwidth}{1em}\setlength{\itemsep}{0em}\setlength{\parsep}{0em}}\usecounter{enumii}
}{
	\end{list}
}
\newtheorem{theorem}{Theorem}
\newtheorem{lemma}[theorem]{Lemma}
\newtheorem{corollary}[theorem]{Corollary}
\theoremstyle{definition}
\begin{document}
\title{Enumerating and identifying semiperfect colorings of symmetrical patterns}
\author{Rene P. Felix and Manuel Joseph C. Loquias\\ \small Institute of Mathematics, University of the Philippines,\\ \small Diliman, C.P. Garcia St., 1101 Diliman, Quezon City, Philippines}
\date{March 3, 2008} 
\maketitle
 
\abstract{If $G$ is the symmetry group of an uncolored pattern then a coloring of the pattern is semiperfect if the associated color group $H$ is a subgroup of $G$ of index 2.  We give results on how to identify and enumerate all inequivalent semiperfect colorings of certain patterns.  This is achieved by treating a coloring as a partition $\{hJ_iY_i:i\in I,h\in H\}$ of $G$, where $H$ is a subgroup of index 2 in $G$, $J_i\leq H$ for $i\in I$, and $Y=\cup_{i\in I}{Y_i}$ is a complete set of right coset representatives of $H$ in $G$.  We also give a one-to-one correspondence between inequivalent semiperfect colorings whose associated color groups are conjugate subgroups with respect to the normalizer of $G$ in the group of isometries of $\mathbf{R}^n$.}

\section{Introduction}
Colors are used to represent and differentiate various chemical, physical, or geometric aspects of a symmetrical structure.  For instance, different colors may correspond to different types of atoms or to different orientations of a magnetic moment in a crystal.  These applications paved the way to the development of the theory of color symmetry.  The theory was firmly established with Shubnikov's work on antisymmetry in the 1950's \cite{Sc, S}.  In addition, scientists then were intrigued by the colored tilings of the Dutch graphic artist Escher and began to analyze them mathematically \cite{Mc}. A comprehensive discussion of the history and applications of the theory of color symmetry by Schwarzenberger can be seen in \cite{Sc}.  Color symmetry remains to be of interest until today because of its relation with crystallography.

In color symmetry, we not only look at the symmetrical pattern, but also the various ways of symmetrically coloring the pattern.  Given a colored symmetrical pattern, three groups are associated to it: the group $G$ of symmetries sending the uncolored pattern to itself, the subgroup $H$ of $G$ consisting of symmetries that induce a permutation of colors in the pattern (called the \emph{color group} associated to the coloring), and the subgroup $K$ of $H$ whose elements fix the colors (called the \emph{symmetry group of the colored pattern}). Since $H$ acts on the set $C$ of colors of the pattern, there exists a  homomorphism $f:H\rightarrow P(C)$ where $P(C)$ is the group of permutations of $C$.  The kernel of $f$ is $K$. Hence, the group of color permutations $f(H)$ is isomorphic to $H/K$ and this is usually referred to as the \emph{color permutation group} corresponding to the coloring.

Senechal outlined the development of color symmetry as applied to symmetrical patterns and posed open problems in \cite{S}.  In the same paper, she pointed out that classifying color groups and classifying colored patterns are not the same since different colored patterns may correspond to the same color group.  Roth also distinguished between the notions of equivalence for colored patterns and equivalence for color groups, and gave several illustrations in \cite{Ro1} and \cite{Ro2}.

A colored symmetrical pattern is said to be perfectly colored if $H=G$.  This is the most restrictive case since all symmetries of the uncolored pattern effect color permutations.  These colorings were first studied by Burckhardt and Van der Waerden in \cite{BVW}.  Perfect colorings of different types of patterns have been studied in detail, such as transitive tilings and patterns in the plane \cite{GS} and
hyperbolic tessalations \cite{DLPFL}.

On the other hand, nonperfect colorings have also appeared in some early works in color symmetry, such as colored patterns in \cite{WG}.  In \cite{Ro1}, Roth discussed nonperfect transitive colorings (referred to as ``partially symmetric colorings'') of certain patterns and their equivalence.  It was suggested by Senechal in \cite{S} that a systematic study of nonperfect colorings might become useful and interesting.  

Rigby encountered nonperfect colorings while studying precise colorings of the regular triangular tiling $\{3,n\}$ \cite{R}.  He coined the term ``semiperfect coloring'' to describe colorings wherein half of the direct (orientation preserving) and half of the opposite (orientation reversing) symmetries of the uncolored pattern permute the colors.  Rigby also used the term ``chirally perfect'' for colorings where all the direct symmetries of the uncolored pattern effect color permutations but none of the opposite symmetries do.  In both cases, the corresponding color group $H$ is of index 2 inside the group $G$ of symmetries of the uncolored regular triangular tiling $\{3,n\}$.  In this paper, we will look at how to obtain colorings where $[G:H]=2$.

\section{Preliminaries}
Let $G$ be a subgroup of an $n$-dimensional crystallographic group and $X$ be the set of objects in a given pattern to be colored.  Suppose
$G$ acts transitively on $X$ such that for all $x\in X$, the stabilizer of $x$ in $G$ is $\{e\}$.  If this is the case, then the $G$-orbit of an $x\in X$ is $Gx=\{gx:g\in G\}=X$ and we obtain a one-to-one correspondence between $G$ and $X$ given by $g\leftrightarrow gx$.  Hence, we can associate a partition $P=\{P_1,\ldots,P_r\}$ of $G$ with the partition $\{P_1x,\ldots,P_rx\}$ of $X$.  Given a set $C=\{c_1,\ldots,c_r\}$ of $r$ colors, we call the assignment of each color $c_i$ to $P_ix$ a \emph{coloring} of $X$ corresponding to the partition $P$.  Thus, a coloring of the pattern is treated as a partition $P$ of $G$ where each element of $P$ corresponds to a unique color. 

The group $G$ acts on the set of all partitions of $G$ by left multiplication. Denote by $H$ the stabilizer of a partition $P$ of $G$.  Hence,
$h\in H\Leftrightarrow hP=P$ and we say that $P$ is an \emph{$H$-invariant partition of $G$}.  Geometrically, an element $h\in H$ is said to \emph{permute} the colors in a coloring of $X$ ($h$ is also called a \emph{color symmetry}).  This means that all objects in $X$ of a given color is mapped by $h$ onto objects in $X$ of a single color.  That is, we can associate $h$ to a permutation of the set of colors.  When $[G:H]=1$ or $H=G$, a coloring associated with $P$ is called \emph{perfect}.  If $[G:H]=2$, we call a coloring associated with $P$ \emph{semiperfect}.

To obtain semiperfect colorings of patterns, we use the framework by De Las Pe\~nas, Felix, and Quilinguin  in \cite{DLPFQ1} and \cite{DLPFQ2}.  Let $H\leq G$ and $Y$ a complete set of right coset representatives of $H$ in $G$.  A $(Y_i,J_i)-H$ \emph{partition of} $G$ is the partition $\{hJ_iY_i:i\in I, h\in H\}$ of $G$ where $Y=\cup_{i\in I}{Y_i}$ and $J_i\leq H$ $\forall i\in I$.  If $P$ is a $(Y_i,J_i)-H$ partition of $G$, then $hP=P$ $\forall h\in H$.  That is, all the elements of $H$ will permute the colors in a coloring associated with a $(Y_i,J_i)-H$ partition of $G$.

Using this framework, we may obtain semiperfect colorings of symmetrical patterns by observing the following procedure:
\begin{enumerate}
	\item Choose a subgroup $H$ of index 2 in $G$.

	\item Choose a complete set $Y$ of right coset representatives of $H$ in $G$.

	\item Partition $Y$ either as $Y=Y_1$ or as $Y=Y_1\cup Y_2$.

	\item \begin{itemize}
		\item If $Y=Y_1$, choose $J_1\leq H$ and form the partition $\{hJ_1Y_1:h\in H\}$.  Such partitions will be referred to as 
		\emph{Type I} partitions and they give rise to colorings having only one orbit of colors.

		\item If $Y=Y_1\cup Y_2$, choose $J_1$, $J_2\leq H$ and form the partition $\{hJ_1Y_1:h\in H\}\cup\{hJ_2Y_2:h\in H\}$.  Such 
		partitions will be referred to as \emph{Type II} partitions and they give rise to colorings having at most two orbits of colors.
	\end{itemize}
\end{enumerate}

To illustrate, consider the uncolored hexagonal pattern in Figure \ref{fig1}(a). Its symmetry group is $G=\langle a,b\rangle\cong D_6$ where $a$ is the $60^{\circ}$-counterclockwise rotation about the center of the hexagon and $b$ is the reflection along the horizontal line through the center of the hexagon.  Note that the pattern may be obtained as the $G$-orbit of the tile labeled ``$e$'' and we obtain an assignment of each element of $G$ to a unique tile in the pattern as shown in Figure \ref{fig1}(a).

Let $H=\langle a^2,b\rangle$, a subgroup of index 2 in $G$, and $Y=\{e,a^3\}$, a complete set of right coset representatives of $H$ in $G$.  Write $Y=Y_1\cup Y_2$ where $Y_1=\{e\}$  and $Y_2=\{a^3\}$ and choose the subgroups $J_1=\langle a^2b\rangle$ and $J_2=H$ of $H$.  Form the Type II partition $\{hJ_1Y_1:h\in H\}\cup\{hJ_2Y_2:h\in H\}=\{\{e,a^2b\},\{b,a^4\},\{a^4b,a^2\},\{ab, a, a^3b, a^3, a^5b, a^5\}\}$.  Upon assigning the color yellow to $\{e,a^2b\}$, the color green to $\{b,a^4\}$, the color blue to $\{a^4b,a^2\}$, and the color red to $\{ab, a, a^3b, a^3, a^5b,a^5\}$, the coloring in Figure \ref{fig1}(b) is obtained.

In Figure \ref{fig1}(b), we see that the reflection $b$ fixes the colors red and blue (that is, all red tiles are mapped by $b$ to red tiles, and the same is true for blue tiles) and interchanges the colors green and yellow (that is, $b$ sends green tiles to yellow tiles and vice-versa).  Thus, $b$ permutes the colors in Figure \ref{fig1}(b).  On the other hand, the rotation $a$ does not permute the colors.  Indeed, some red tiles are mapped by $a$ to yellow tiles while others are mapped to blue (and green) tiles.  Looking at the effect of the other symmetries in $G$ on the colors, we conclude that the coloring in Figure \ref{fig1}(b) is semiperfect since only the elements of $H$ permute the colors.  Also, the coloring has two orbits of colors, namely, $\{\text{blue},\text{green},\text{yellow}\}$ and $\{\text{red}\}$.

\noindent\begin{figurehere}
\centering{\includegraphics[width=40mm]{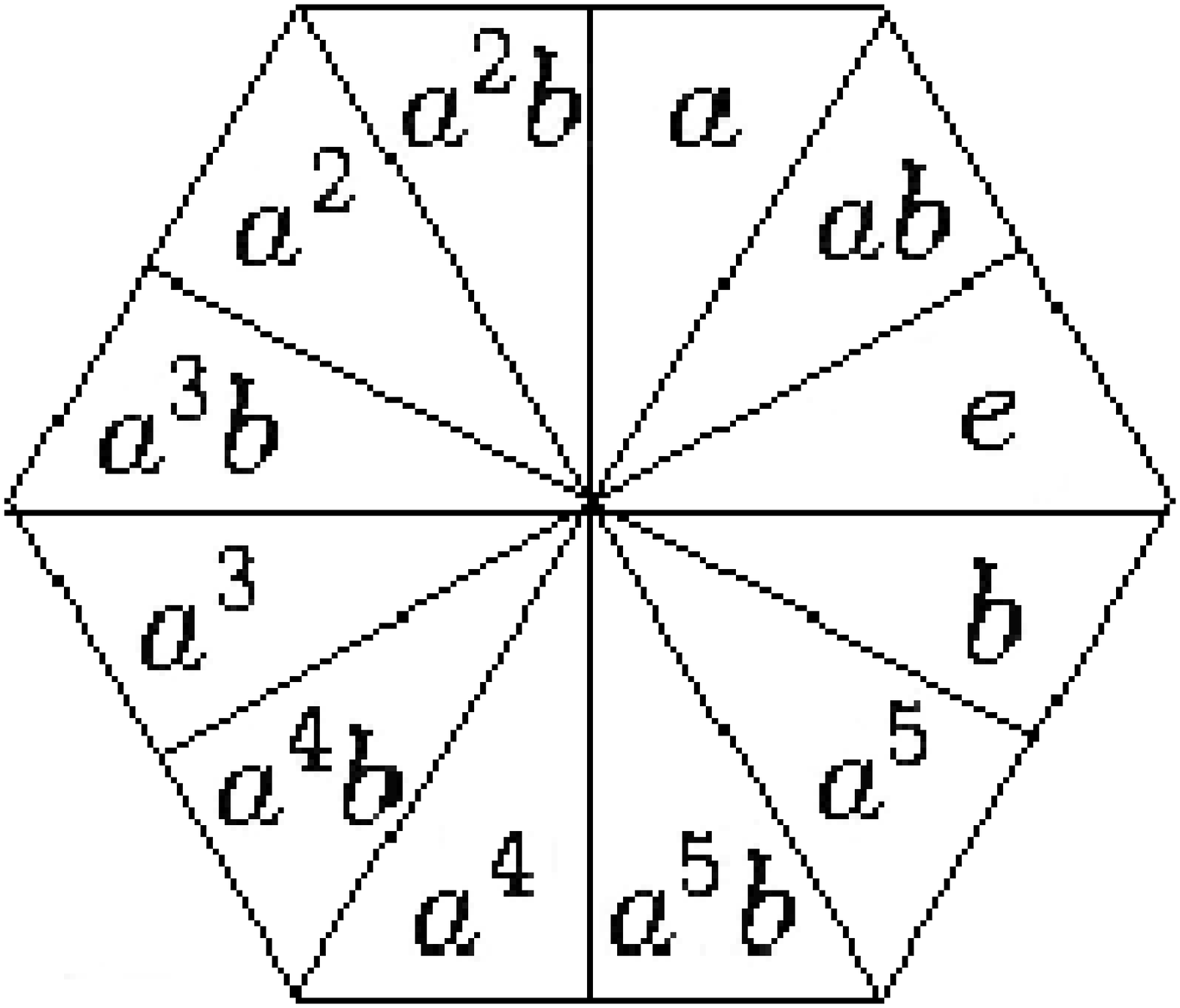}}

\centering{(a)}

	\begin{minipage}[c]{40mm}
		\centering{\includegraphics[width=35mm]{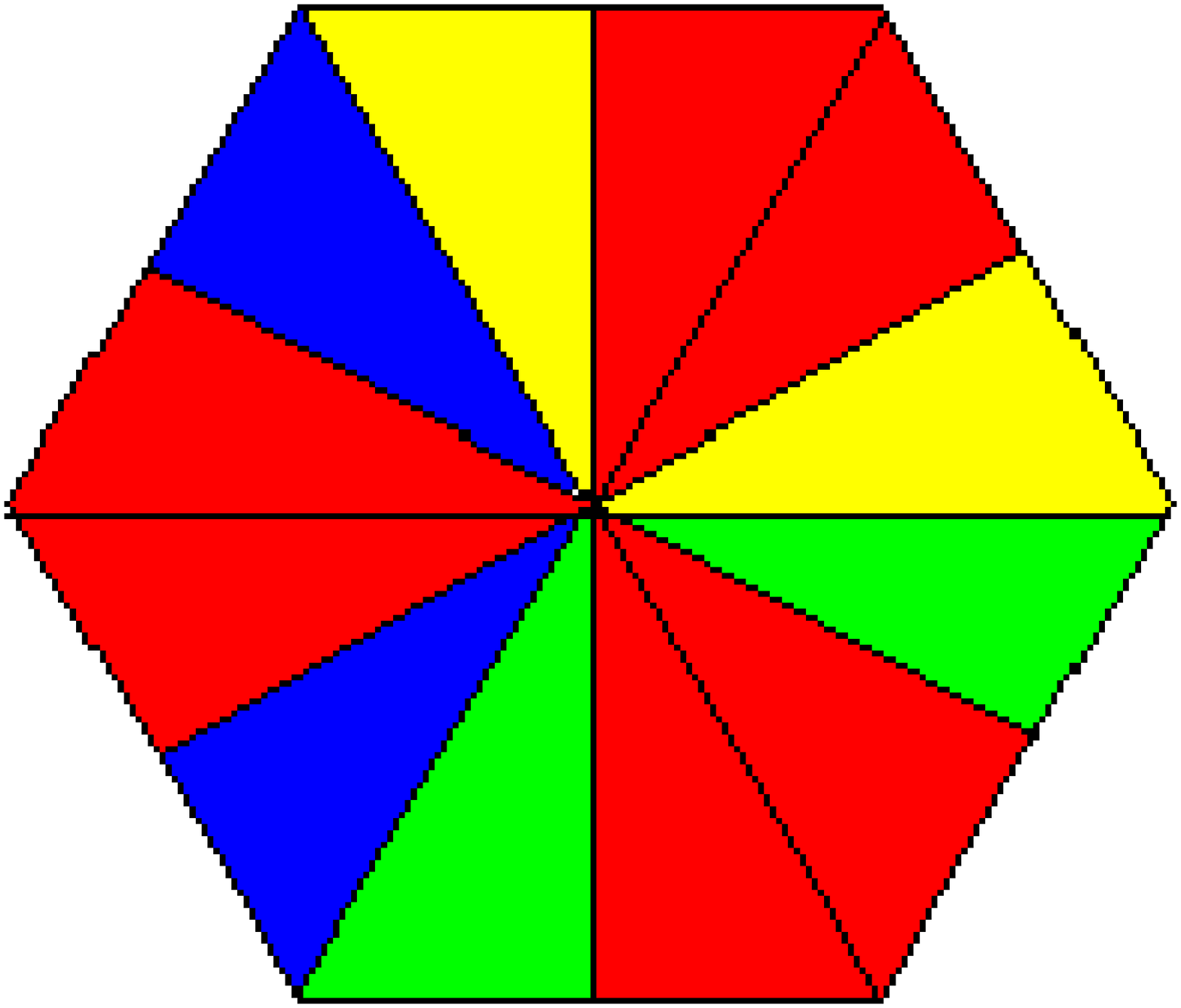}}
		
		\centering{(b)}
	\end{minipage}
	\begin{minipage}[c]{40mm}
		\centering{\includegraphics[width=35mm]{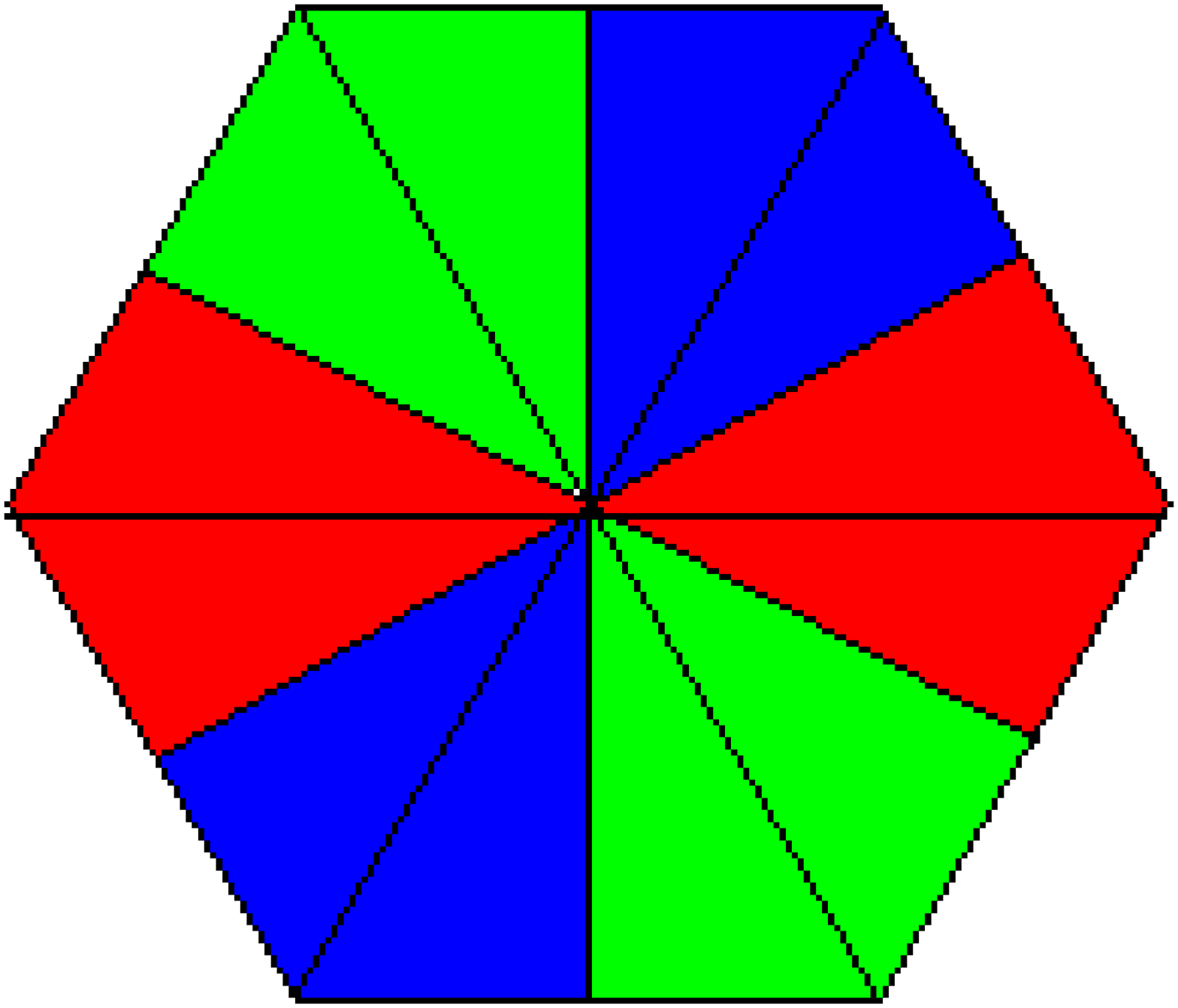}}

		\centering{(c)}
	\end{minipage}
\caption{Semiperfect (b) and perfect (c) colorings of the hexagonal pattern in (a) }
\label{fig1}
\end{figurehere}

\medskip If $H^*$ is the stabilizer of a $(Y_i,J_i)-H$ partition of $G$ then $H\leq H^*$.  Since we chose $H$ such that $[G:H]=2$, the coloring corresponding to the $(Y_i,J_i)-H$ partition of $G$ will be either semiperfect or perfect.  For instance, if we partition $Y$ in the previous example as $Y=\{e,a^3\}$ and choose $J=\langle b\rangle$, we generate the perfect coloring in Figure \ref{fig1}(c).  In this paper, we give methods on how to determine whether a $(Y_i,J_i)-H$ partition of $G$ where $[G:H]=2$ corresponds to a perfect or semiperfect coloring without doing the actual coloring.

\section{Equivalence of colorings}

In enumerating semiperfect colorings, we will only consider those that are inequivalent to each other.  Two colorings of the same symmetrical pattern
are said to be \emph{equivalent} \cite{Ro1} if one of the colored patterns may be obtained from the other colored pattern by

\begin{enumerate}
	\item a bijection from $C_1$ to $C_2$ where $C_i$ is the set of colors in the $i$th colored pattern for $i=1,2$, or

	\item a symmetry of the uncolored pattern, or

	\item a combination of $(1)$ and $(2)$.
\end{enumerate}

When we apply a symmetry of an uncolored pattern to a coloring of the pattern, we obtain a reassignment of the colors to different objects in the pattern.  Hence, only a relabelling of the colors is necessary to show that two perfect colorings are equivalent \cite{Ro1}.  However, given two
nonperfect colorings of the same pattern, it is usually not easy to determine whether they are equivalent. The following are some suggestions on
how to find out if the two colorings are equivalent or not.

\begin{enumerate}
	\item  Check that the number of colors used in both colorings are the same.  If this is not the case, then they must be inequivalent.

	\item  Count the number of color orbits formed in both colorings.  Note that the patterns formed by the colors belonging to one orbit of color 
	are necessarily congruent.  If the number of color orbits are not equal, then the colorings are inequivalent.

	\item  If the number of colors and the number of orbits of colors are the same, then using the same set of colors for both colorings 
	facilitates in distinguishing them.

	\item Choose a color in one of the colorings and identify the pattern formed by that color.  If the same pattern does not appear for some 
	color in the other coloring then the two colorings are inequivalent.
\end{enumerate}

For instance, the colorings in Figures \ref{fig1}(b) and \ref{fig1}(c) are clearly inequivalent because Figure \ref{fig1}(b) consists of four colors and two color orbits while Figure \ref{fig1}(c) consists of only three colors and one color orbit.

Consider the uncolored pattern in Figure \ref{fig2} which is assumed to repeat over the entire plane.  Its symmetry group is the crystallographic group $G=\langle a,b,x,y\rangle$ of type $p4m$ where $a$ is the $90^{\circ}$-counterclockwise rotation about the indicated point $P$, $b$ is the reflection along the horizontal line passing through $P$, and $x$ and $y$ are translations as indicated.

\noindent\begin{figurehere}
	\centering{\includegraphics[width=70mm]{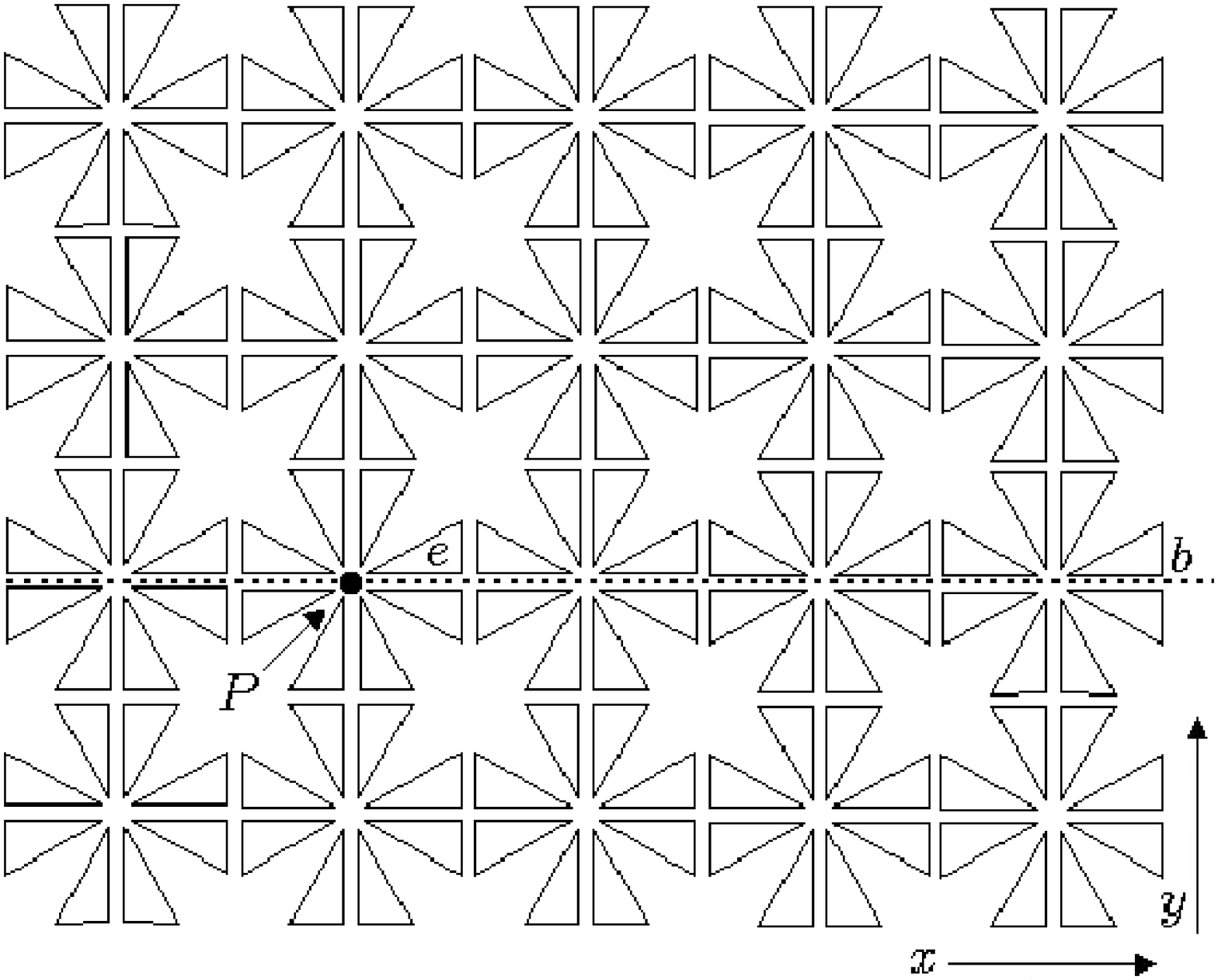}}
	
\caption{Uncolored pattern with symmetry group of type $p4m$}
\label{fig2}
\end{figurehere}

\noindent\begin{center}\begin{figurehere}
	\begin{minipage}[c]{50mm}
		\includegraphics[width=50mm]{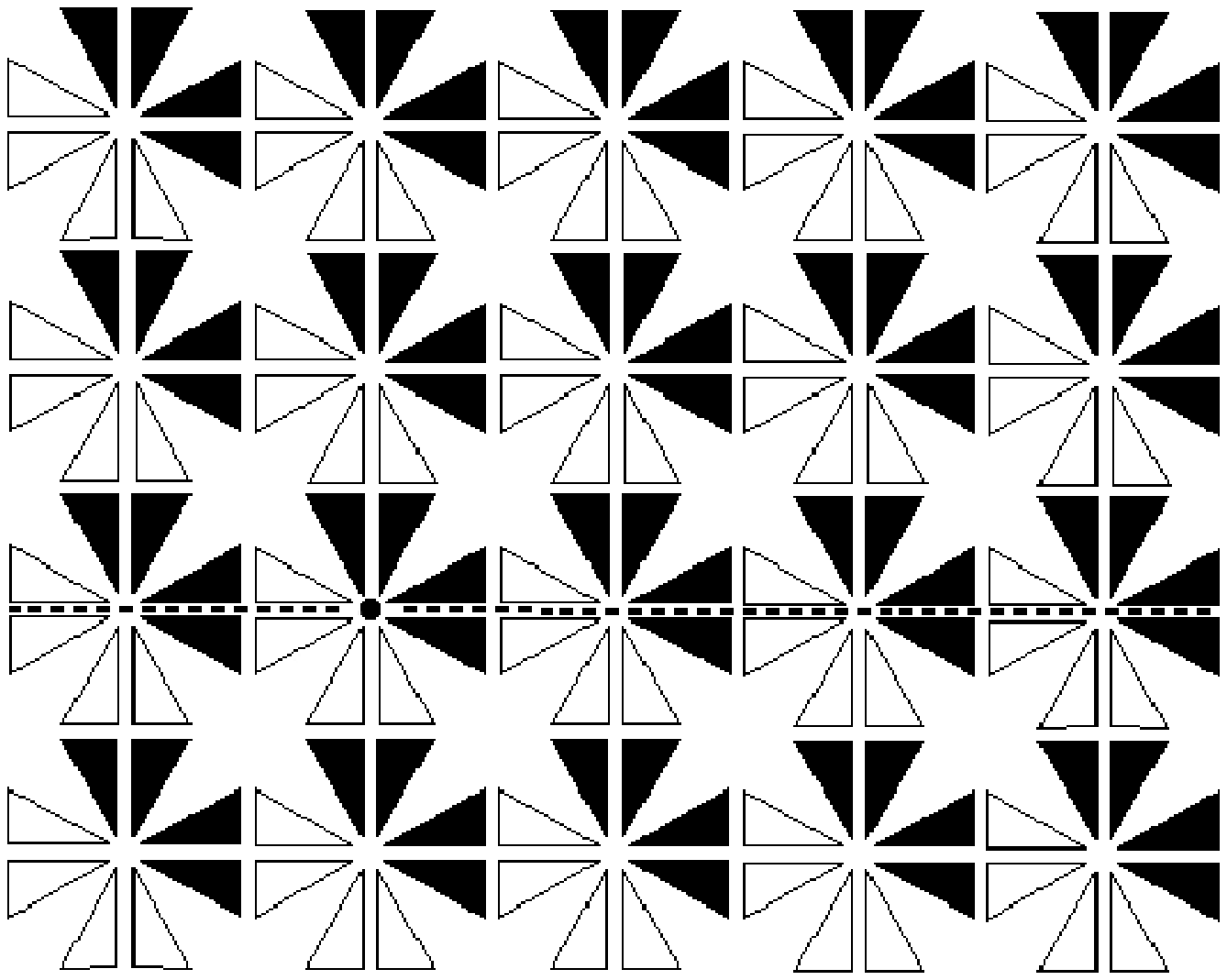}
		
		\centering{(a)}
	\end{minipage}\quad
	\begin{minipage}[c]{50mm}
		\includegraphics[width=50mm]{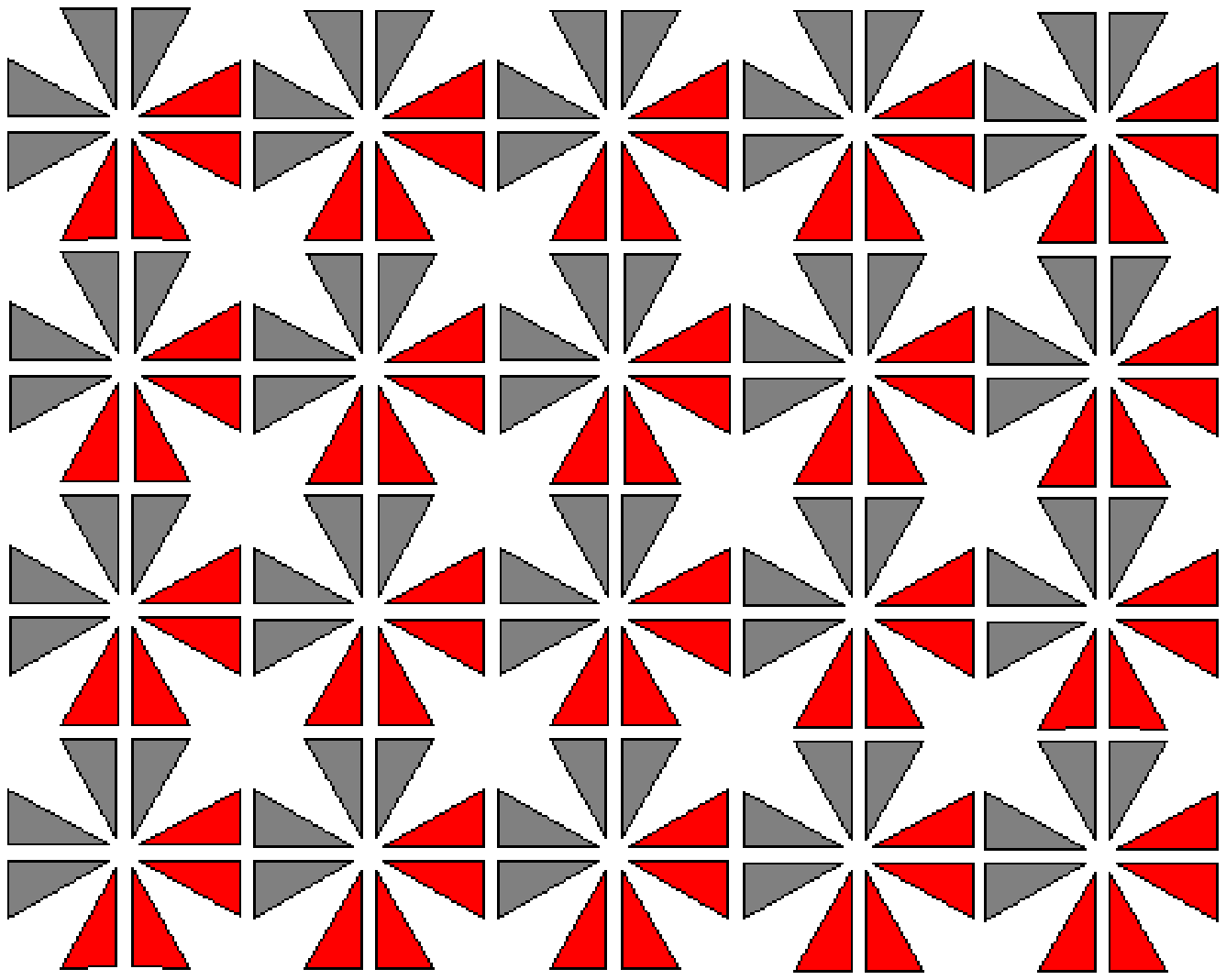}

		\centering{(b)}
	\end{minipage}
\caption{Two equivalent colorings of the uncolored pattern in Figure \ref{fig2}}
\label{fig3}
\end{figurehere}\end{center}

\medskip The colorings of the pattern in Figure \ref{fig3} are equivalent.  Indeed, suppose we replace the color black by red and the color white by gray in Figure \ref{fig3}(a).  We now see that the coloring in Figure \ref{fig3}(b) is just the image of Figure \ref{fig3}(a) (now colored also with red and gray) under the reflection $b$.

We now define equivalence of colorings via partitions of the symmetry group $G$ of the uncolored pattern.  If $P$ and $Q$ are partitions of $G$ and there is a $g\in G$ such that $Q=gP$, then we say that $P$ and $Q$, as well as their associated colorings, are \emph{equivalent}.

To illustrate, the partitions of $G=\langle a,b,x,y\rangle$ of type $p4m$ corresponding to the colorings in Figure \ref{fig3}(a) and (b) are $P=\{h\langle ab,x,y\rangle\{e,a\}:h\in\langle ab,a^3b,x,y\rangle\}$ and $Q=\{h\langle a^3b,x,y\rangle\{e,b\}:h\in\langle ab,a^3b,x,y\rangle\}$, respectively.  We have $Q=bP$ and hence, the partitions $P$ and $Q$, and the two colorings, are equivalent.

We will look at results on how one should select the subgroup $H$ of index 2 in $G$, the complete set of right coset representatives $Y$ of $H$ in $G$, the partition of $Y$, and the subgroups $J$, or $J_1$ and $J_2$ of $H$, in order to avoid obtaining colorings equivalent to those already generated. We start with the following theorem which is a special case of results in \cite{Q} and \cite{Ro1}.

\begin{theorem}\label{thm1}
 	Let $G$ be a group and $P$ an $H$-invariant partition of $G$.  If $[G:H]=2$ then there are only two partitions of $G$ that are equivalent to 
	$P$, namely, $P$ and $yP$, for some $y\in G\setminus H$.  Moreover, the stabilizer of $yP$ in $G$ is also $H$.
\end{theorem}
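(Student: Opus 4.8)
The plan is to identify the collection of partitions equivalent to $P$ with the orbit of $P$ under the action of $G$ on partitions of $G$ by left multiplication, and then invoke the orbit--stabilizer theorem. By the definition of equivalence of partitions given above, a partition $Q$ is equivalent to $P$ exactly when $Q=gP$ for some $g\in G$; hence the equivalence class of $P$ is precisely the orbit $G\cdot P=\{gP:g\in G\}$. Since $H$ is by hypothesis the stabilizer of $P$ under this action, orbit--stabilizer yields $|G\cdot P|=[G:H]=2$.

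Next I would identify the two members of this orbit explicitly. Certainly $P\in G\cdot P$. Fixing any $y\in G\setminus H$, we have $yP\neq P$ because $y\notin H=\mathrm{Stab}(P)$, so $\{P,yP\}$ already accounts for both elements of the two-element orbit. One then checks that $yP$ does not depend on the choice of $y$: if $y'\in G\setminus H$ as well, then $yH=y'H=G\setminus H$ since $[G:H]=2$, so $y^{-1}y'\in H$ and therefore $y'P=y(y^{-1}y')P=yP$, using that $y^{-1}y'$ stabilizes $P$. Thus every $y\in G\setminus H$ produces the same partition $yP$, and the partitions equivalent to $P$ are exactly $P$ and $yP$.

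For the ``moreover'' assertion, I would compute the stabilizer of $yP$ directly: for $g\in G$,
\[
g(yP)=yP \iff (y^{-1}gy)P=P \iff y^{-1}gy\in H \iff g\in yHy^{-1},
\]
so the stabilizer of $yP$ is the conjugate $yHy^{-1}$. Finally, a subgroup of index $2$ is normal, so $yHy^{-1}=H$, and the stabilizer of $yP$ is $H$, as desired.

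There is no real obstacle here; the two points that merit a word of care are that the equivalence class of $P$ is literally its $G$-orbit — immediate from the definition of equivalence for partitions — and that $yP$ is independent of the choice of representative $y\in G\setminus H$, which rests on the index-$2$ hypothesis in exactly the same way as the concluding normality argument.
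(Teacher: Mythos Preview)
Your proof is correct and follows essentially the same route as the paper: identify the equivalence class of $P$ with its $G$-orbit, apply orbit--stabilizer to get size $[G:H]=2$, and compute the stabilizer of $yP$ as $yHy^{-1}=H$ using normality of index-$2$ subgroups. The only difference is cosmetic---you spell out that $yP\neq P$ and that $yP$ is independent of the choice of $y\in G\setminus H$, details the paper leaves implicit.
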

\begin{proof}
 	Let $E=\{gP:g\in G\}$ be the set of partitions of $G$ that are equivalent to $P$.  Then $E$ is the orbit of $P$ under the action of $G$ on 
	the set of partitions of $G$ by left multiplication.  By the Orbit-Stabilizer Theorem, $|E|=[G:H]=2$ and $E=\{P,yP\}$, for some $y\in 
	G\setminus H$.  Now, $g(yP)=yP\Leftrightarrow (y^{-1}gy)P=P\Leftrightarrow g\in yHy^{-1}=H$ since $H\unlhd G$.
\end{proof}

\section{Type II partitions}
We first consider Type II partitions of $G$ for a fixed subgroup $H$ of index 2 in $G$.  We want to enumerate all inequivalent partitions of $G$ of the form $P=\{hJ_1\{x\}:h\in H\}\cup\{hJ_2\{y\}:h\in H\}$ where $J_1$, $J_2\leq H$, $x\in H$, and $y\in G\setminus H$.  However, we can write
$P$ as $P=\{hJ_1':h\in H\}\cup y\{hJ_2':h\in H\}$ where $J_1'={J_1}^{x^{-1}}=x^{-1}J_1x\leq H$, $J_2'={J_2}^{\,y^{-1}}\leq H$.  Thus, we can look at instead partitions of $G$ of the form $\{hJ_1:h\in H\}\cup y\{hJ_2:h\in H\}$, where $J_1$, $J_2\leq H$, and $y\in G\setminus H$.

\begin{theorem}\label{thm2}
 	Let $H\leq G$ with $[G:H]=2$, $J_1$, $J_2\leq H$, $y\in G\setminus H$, and $P=P_1\cup yP_2$ where $P_i=\{hJ_i:h\in H\}$ for $i=1,2$.
	\begin{itemize}
	 	\item For all $y'\in G\setminus H$, $P_1\cup y'P_2=P_1\cup yP_2=P$.

		\item The two distinct partitions of $G$ equivalent to $P$ are $P$ and $P_2\cup yP_1=\{hJ_2:h\in H\}\cup y\{hJ_1:h\in H\}$.

		\item A coloring induced by $P$ is perfect if and only if $J_1=J_2$.
	\end{itemize}
\end{theorem}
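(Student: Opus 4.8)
The plan is to establish the three items in turn, using throughout that since $[G:H]=2$ the nontrivial coset $G\setminus H$ is the single coset $yH=Hy$; in particular $y^{-1}y'\in H$ for every $y'\in G\setminus H$, and $y^{2}\in H$. Note also that every cell of $P=P_1\cup yP_2$ lies wholly in $H$ (the cells coming from $P_1$, which are left cosets $hJ_1\subseteq H$) or wholly in $G\setminus H$ (the cells coming from $yP_2$), so such a partition is determined by its two ``halves''. For the first item I would expand $y'P_2=\{y'hJ_2:h\in H\}$ and write $y'h=y\,(y^{-1}y')\,h$; as $h$ runs over $H$ so does $(y^{-1}y')h$, hence $y'P_2=yP_2$ and therefore $P_1\cup y'P_2=P_1\cup yP_2=P$.

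Next I would record two consequences of the same reindexing trick. First, $P$ is invariant under $H$: for $h\in H$ we have $hP_1=\{(hh')J_1:h'\in H\}=P_1$, and $h(yP_2)=yP_2$ by the first item applied with $y':=hy\in G\setminus H$, so $hP=P$. Second, $yP=yP_1\cup y^{2}P_2$, and since $y^{2}\in H$ we get $y^{2}P_2=\{y^{2}hJ_2:h\in H\}=P_2$; thus $yP=P_2\cup yP_1=\{hJ_2:h\in H\}\cup y\{hJ_1:h\in H\}$, which is exactly the partition named in the second item.

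For the second item, the partitions equivalent to $P$ are the members of the $G$-orbit of $P$; since $gP=P$ for $g\in H$ and $gP=yP$ for $g\in G\setminus H=yH$ (write $g=yh$ and use the $H$-invariance just proved), this orbit is $\{P,\,yP\}=\{P,\,P_2\cup yP_1\}$, and in the case of interest one may instead simply invoke Theorem \ref{thm1}. For the third item, let $S$ be the stabilizer of $P$ in $G$; then $H\le S\le G$ with $[G:H]=2$ forces $S=H$ or $S=G$, and $S=G$ --- i.e.\ the coloring is perfect --- precisely when $yP=P$, that is, when $P_2\cup yP_1=P_1\cup yP_2$. Comparing the cells lying inside $H$ on the two sides gives $P_1=P_2$ as partitions of $H$, and since $J_i$ is the unique cell of $P_i$ containing $e$, this is equivalent to $J_1=J_2$; the converse implication is immediate. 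This proves the third item and, as a by-product, shows that the two partitions in the second item are distinct exactly when the coloring is semiperfect.

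I do not expect a genuine obstacle: the whole argument is coset bookkeeping. The single point requiring care is the principle used repeatedly above --- that a partition of $G$ refining $\{H,\,G\setminus H\}$ is pinned down by its restrictions to $H$ and to $G\setminus H$, so equalities between such partitions (as in the first item and in the equation $yP=P$) may be verified half by half --- together with the remark that a partition of $H$ into the left cosets of a subgroup $J$ remembers $J$ as the part through the identity. These two observations are what make the passage between statements about $P$ and statements about $J_1,J_2$ clean.
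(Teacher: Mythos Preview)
Your proof is correct and follows essentially the same approach as the paper's: for (a) both arguments reduce to $y'P_2=yP_2$ via $y'\in yH$; for (b) both invoke Theorem~\ref{thm1} and compute $yP=P_2\cup yP_1$; for (c) both argue that perfection is equivalent to $yP=P$, then compare the $H$-halves to get $P_1=P_2\Leftrightarrow J_1=J_2$. Your write-up simply makes explicit a few steps the paper leaves implicit (the $H$-invariance of $P$, the use of $y^2\in H$, and the ``cell through $e$'' argument for $P_1=P_2\Leftrightarrow J_1=J_2$).
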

\begin{proof}
	\begin{itemize}
		\item[]

		\item Since $y'\in G\setminus H=yH$, $y'P_2=yP_2$.

		\item By Theorem \ref{thm1}, $P$ is only equivalent to $P$ and $yP=y(P_1\cup yP_2)=P_2\cup yP_1$.

		\item The coloring corresponding to $P$ is perfect $\Leftrightarrow$ $gP=P$ $\forall g\in G$.  Since $hP=P$ $\forall h\in H$, and 
		$P_1$ and $P_2$ are partitions of $H$ into left cosets, we conclude that the coloring is perfect $\Leftrightarrow yP=P 
		\Leftrightarrow P_2\cup yP_1=P_1\cup yP_2\Leftrightarrow P_1=P_2\Leftrightarrow J_1=J_2$. \qedhere
	\end{itemize}
\end{proof}	

Theorem \ref{thm2} suggests that in order to generate all inequivalent Type II $H$-invariant partitions of $G$ that correspond to semiperfect colorings;
\begin{enumerate}
	\item Choose any $y\in G\setminus H$.

	\item Take all possible 2-combinations $\{J_1,J_2\}$ of subgroups of $H$.

	\item Form the different partitions $\{hJ_1:h\in H\}\cup y\{hJ_2:h\in H\}$ of $G$.
\end{enumerate}
Therefore, if $H$ has $n$ subgroups then there are $\binom{n}{2}$ inequivalent $H$-invariant partitions of $G$ of Type II.

Consider the subgroup $H=\langle a^2,b\rangle$ of index 2 in the symmetry group $G=\langle a,b\rangle$ of the uncolored hexagonal pattern in Figure
\ref{fig1}(a).  Since $H$ has 6 subgroups, there are $\binom{6}{2}=15$ inequivalent semiperfect colorings of the hexagonal pattern with $H=\langle a^2,b\rangle$ as its associated color group and two orbits of colors.

Remember that the uncolored repeating pattern in Figure \ref{fig2} has symmetry group $G$ of type $p4m$.  Choose $H$ to be a subgroup of index 2 in $G$ that is also of type $p4m$.  The group $H$ has 7 subgroups of index 2 and no subgroups of index 3.  Hence, there are $\binom{8}{2}+0=28$ inequivalent Type II $H$-invariant partitions of $G$ that correspond to colorings having at most four colors.

We remark here that Theorem \ref{thm2}(c) is equivalent to the result in \cite{DLPF} (see also \cite{DLPP}) that the Type II partition of $G$ of the form $\{hJ_1:h\in H\}\cup\{hJ_2\{y\}:h\in H\}$ corresponds to a perfect coloring if and only if $J_2={J_1}^{y}$.  For example, recall that the coloring in Figure \ref{fig1}(b) was obtained from a Type II partition of $G=\langle a,b\rangle$ for which $J_1=\langle a^2b\rangle$, $J_2=H=\langle a^2,b\rangle$, $Y_1=\{e\}$, and $Y_2=\{a^3\}$.  Clearly, $J_2\neq {J_1}^{a^3}$ and this is why the coloring in Figure \ref{fig1}(b) is semiperfect.

\section{Type I partitions}

We now turn our attention to partitions of $G$ of Type I.

\begin{lemma}\label{lemma3}
	Let $H\leq G$ with $[G:H]=2$.  Let $J\leq H$, $g\in G$, $Y$ a complete set of right coset representatives of $H$ in $G$, and 
	$g^{-1}Y=\{x,y\}$ where $x\in H$, $y\in G\setminus H$.  The following statements hold:
	\begin{itemize}
	 	\item $\{hJ^gY:h\in H\}=g\{hJ(g^{-1}Y):h\in H\}$

		\item $\{hJ\{x,y\}:h\in H\}=x\{hJ^{x^{-1}}\{e,x^{-1}y\}:h\in H\}$

		\item If $J'=J^{x^{-1}}$ and $y'\in J'x^{-1}y$, then

		\item[] $\{hJ'\{e,x^{-1}y\}:h\in H\}=\{hJ'\{e,y'\}:h\in H\}$.

		\item $\{hJ'\{e,y'\}:h\in H\}=\{h(J'\cup J'y'):h\in H\}$
	\end{itemize}
\end{lemma}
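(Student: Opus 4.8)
The plan is to prove the four identities in the order stated, the first being the only one with real content and the remaining three following either by specialising it or by an elementary computation with products of subsets. At the outset I would record two facts used repeatedly. Since $[G:H]=2$ we have $H\unlhd G$, so conjugation by any $g\in G$ restricts to an automorphism of $H$; in particular $h\mapsto ghg^{-1}$ is a bijection of $H$ onto itself, even when $g\notin H$. Also, the operative conjugation convention here is $J^{g}=gJg^{-1}$ — this is exactly what makes the earlier identity $J_1^{x^{-1}}=x^{-1}J_1x$ correct — and if $Y$ is a complete set of right coset representatives of $H$ then so is $g^{-1}Y$, so every family $\{hKZ:h\in H\}$ appearing below is a bona fide partition of $G$ and the formal manipulations are legitimate.

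For the first bullet I would expand the right-hand side and commute $g$ past the index. We have $g\{hJ(g^{-1}Y):h\in H\}=\{ghJg^{-1}Y:h\in H\}$, and the typical block factors as $ghJg^{-1}Y=(ghg^{-1})(gJg^{-1})Y=(ghg^{-1})J^{g}Y$. Replacing the index $h$ by $h'=ghg^{-1}$, which runs bijectively over $H$, rewrites the family as $\{h'J^{g}Y:h'\in H\}$, i.e. the left-hand side. The whole proof is the single move $gh=(ghg^{-1})g$ together with the normality of $H$.

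The second bullet is the same computation with $g$ taken to be the element $x\in H$: here $x^{-1}\{x,y\}=\{e,x^{-1}y\}$, so the block $xhJ^{x^{-1}}\{e,x^{-1}y\}$ equals $(xhx^{-1})J\bigl(x\{e,x^{-1}y\}\bigr)=(xhx^{-1})J\{x,y\}$, and re-indexing by $h'=xhx^{-1}$ turns the right-hand family into $\{h'J\{x,y\}:h'\in H\}$. For the third bullet the only point is that $y'\in J'x^{-1}y$ forces $J'y'=J'x^{-1}y$: if $y'=j'x^{-1}y$ with $j'\in J'$, then $J'y'=J'j'x^{-1}y=J'x^{-1}y$ because $J'$ is a subgroup; hence $J'\{e,y'\}=J'\cup J'y'=J'\cup J'x^{-1}y=J'\{e,x^{-1}y\}$, and left-multiplying each block by $h\in H$ gives the stated equality of families. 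The fourth bullet is just the set-product identity $J'\{e,y'\}=J'\cup J'y'$ applied inside each block, so it needs no argument beyond unwinding the notation.

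I expect the only step requiring any care to be the first, and even there the obstacle is bookkeeping rather than depth: one must keep the conjugation convention consistent and confirm that the re-indexing $h\mapsto ghg^{-1}$ lands in $H$ when $g\notin H$, so that the re-indexed family is still indexed by all of $H$ and not by the nontrivial coset of $H$ in $G$. Once that is pinned down, the second, third, and fourth identities are immediate, and I would likely present them as a single short chain of equalities rather than as three separate proofs.
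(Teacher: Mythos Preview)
Your argument is correct in all four parts. The paper actually states Lemma~\ref{lemma3} without proof, so there is nothing to compare against; your approach---factoring out $g$ (resp.\ $x$) and re-indexing via the bijection $h\mapsto ghg^{-1}$ of $H$, which is well-defined because $H\unlhd G$---is the natural one, and your identification of the conjugation convention $J^{g}=gJg^{-1}$ from the paper's earlier line $J_1'={J_1}^{x^{-1}}=x^{-1}J_1x$ is exactly right and is the only point where the bookkeeping could have gone wrong.
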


For a given subgroup $H$ of index 2 in $G$, we want to determine all the inequivalent Type I partitions of $G$ that could give rise 
to semiperfect colorings.  It is enough to consider a representative $J$ from each conjugacy class of subgroups of $H$ in $G$ by Lemma \ref{lemma3}(a).
From Lemma \ref{lemma3}(b), the complete set of right coset representatives of $H$ in $G$ may be taken as $\{e,y\}$ as long as we consider the conjugates of each $J$ by elements of $H$.  This, together with Lemma \ref{lemma3}(d), implies that we only need to look at Type $I$ partitions of $G$ of the form $P=\{h(J\cup Jy):h\in H\}$.  Note that $P$ is determined by the set $J\cup Jy$ where $J\leq H$ and $y\in G\setminus H$.  In addition, Lemma \ref{lemma3}(c) indicates that when choosing $y$, we only need to take representatives from each right coset of $J$ in $G$ that is contained in $G\setminus H$.

Assume that a representative subgroup $J$ from each conjugacy class of subgroups of $H$ in $G$ has been chosen.  From Lemma \ref{lemma3}, we only need to look at all partitions of the form $P^l(r):=\{h(J^l\cup J^lr):h\in H\}$ where $l$ runs over a complete set of left coset representatives of $N_H(J)$ in $H$ and $r$ runs over a complete set of right coset representatives of $J^l$ in $G$ that are not in $H$.  Note that there are $[H:N_H(J)]\cdot[H:J]$ such partitions.  However, some of these partitions may still be equivalent to each other.  We address this problem in the next theorem.

\begin{theorem}\label{thm4}
 	Suppose $J\leq H\leq G$ with $[G:H]=2$.  Let $L$ be a complete set of left coset representatives of $N_H(J)$ in $H$ and for each $l\in L$,
	let $R(l)$ be a complete set of right coset representatives of $J^l$ in $G$ that are not in $H$.  If $N_G(J)=N_H(J)$ then the semiperfect
	colorings associated with the partitions $P^l(r)=\{h(J^l\cup J^lr):h\in H\}$ of $G$ for each $l\in L$, $r\in R(l)$ are inequivalent to each 
	other.  Otherwise, a pairing of equivalent semiperfect colorings is obtained.
\end{theorem}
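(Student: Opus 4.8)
The plan is to reduce the whole question to Theorem~\ref{thm1}. Since $[G:H]=2$ and every $P^l(r)$ is $H$-invariant, Theorem~\ref{thm1} says that the only partition equivalent to $P^l(r)$ other than itself is $yP^l(r)$, for any fixed $y\in G\setminus H$. So two of the listed partitions $P^l(r)$ and $P^{l'}(r')$ with $(l,r)\ne(l',r')$ are equivalent \emph{if and only if} $P^{l'}(r')=yP^l(r)$, and the proof comes down to deciding for which index pairs this occurs. First I would record that the listed partitions are genuinely distinct: the block of $P^l(r)$ containing $e$ is exactly $J^l\cup J^lr$ (one checks $e\in h(J^l\cup J^lr)$ iff $h\in J^l$, and $\bigcup_{h\in J^l}h(J^l\cup J^lr)=J^l\cup J^lr$), so intersecting the identity block with $H$ recovers $J^l$, hence $l$ because $L$ parametrizes the $H$-conjugates of $J$ one per class, and then $J^lr$ recovers the right coset of $r$, hence $r$ since $R(l)$ has one representative per such coset.

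Next I would put $yP^l(r)$ into the same normal form. Using $H\unlhd G$ to write $yh=(yhy^{-1})y$, one gets $yP^l(r)=\{h'\,y(J^l\cup J^lr):h'\in H\}$, and identifying which of these blocks contains $e$ shows that the identity block of $yP^l(r)$ is $r^{-1}(J^l\cup J^lr)=J^{r^{-1}l}\cup J^{r^{-1}l}r^{-1}$. Since an $H$-invariant partition of $G$ is determined by its identity block (here $\{hB:h\in H\}$ already covers $HB=G$), this gives $yP^l(r)=\{h(J^{r^{-1}l}\cup J^{r^{-1}l}r^{-1}):h\in H\}$, whose identity block meets $H$ in $J^{r^{-1}l}$, i.e.\ $J$ conjugated by the element $r^{-1}l\in G\setminus H$.

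The decisive step is the dichotomy on $N_G(J)$. If $N_G(J)=N_H(J)$, equivalently $N_G(J)\subseteq H$, then no conjugate of $J$ by an element of $G\setminus H$ is $H$-conjugate to $J$ (if $gJg^{-1}=hJh^{-1}$ with $h\in H$ then $h^{-1}g\in N_G(J)\subseteq H$ forces $g\in H$), so $J^{r^{-1}l}$ equals no $J^{l'}$ with $l'\in L$; hence $yP^l(r)$ is not on the list, and by Theorem~\ref{thm1} each $P^l(r)$ is inequivalent to every other, which is the first assertion. If $N_G(J)\ne N_H(J)$, pick $g_0\in N_G(J)\setminus H$; then for any $g\in G\setminus H$ the element $gg_0$ lies in $H$ and $J^{gg_0}=J^g$, so every conjugate of $J$ by an element of $G\setminus H$ is $H$-conjugate to $J$. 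Thus $J^{r^{-1}l}=J^{l'}$ for a unique $l'\in L$, and applying Lemma~\ref{lemma3}(c) to replace $r^{-1}$ by the representative $r'\in R(l')$ lying in the same right $J^{l'}$-coset yields $yP^l(r)=P^{l'}(r')$. Setting $\phi(l,r)=(l',r')$, the identity $y^2\in H$ makes $\phi$ an involution on the index set, with $\phi(l,r)=(l,r)$ precisely when $P^l(r)$ is perfect; hence $\phi$ restricts to a fixed-point-free involution on the semiperfect members of the list, each paired with an equivalent one, which is the claimed pairing.

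I expect the main obstacle to be the dichotomy in the last paragraph, namely proving cleanly that for $g\notin H$ the conjugate $J^g$ is $H$-conjugate to $J$ if and only if $N_G(J)\not\subseteq H$; once that coset argument is in hand, everything else is Theorem~\ref{thm1} plus bookkeeping with Lemma~\ref{lemma3}. The secondary points needing care are the normal-form reduction in the second paragraph (it rests on an $H$-invariant partition being determined by its identity block) and the verification that the perfect $P^l(r)$ are exactly the fixed points of $\phi$, so that the pairing really is among semiperfect colorings only.
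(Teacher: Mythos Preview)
Your proposal is correct and follows essentially the same route as the paper's proof: both invoke Theorem~\ref{thm1} to reduce the question to whether $yP^l(r)$ (with $y\in G\setminus H$; the paper takes $y=r^{-1}$) appears elsewhere on the list, compute the identity block of that partition as $J^{r^{-1}l}\cup J^{r^{-1}l}r^{-1}$, and then resolve the dichotomy by the coset argument on $N_G(J)$ versus $N_H(J)$. Your write-up is in fact a bit more careful than the paper's in two places---you explicitly verify that the listed $P^l(r)$ are pairwise distinct, and you frame the pairing as a fixed-point-free involution on the semiperfect indices---but these are elaborations of the same argument, not a different one.
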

\begin{proof}
 	By Theorem \ref{thm1}, if $P^l(r)$ corresponds to a semiperfect coloring then a partition $P^x(y)$ is equivalent to $P^l(r)$ if and only if 
	$P^x(y)=P^l(r)$ or $P^x(y)=rP^l(r)$.  We have $P^x(y)=P^l(r) \Leftrightarrow x=l$ and $y=r$.  Now,
	\begin{alignat*}{2}
	 	rP^l(r)&=r^{-1}\{h(J^l\cup J^lr):h\in H\}\\
		&=\{h(r^{-1}J^l\cup r^{-1}J^lr):h\in H\}\\
		&=\{h(J^{r^{-1}l}\cup J^{r^{-1}l}r^{-1}):h\in H\}.
	\end{alignat*}
	Thus, $P^x(y)=rP^l(r)\Leftrightarrow J^x\cup J^xy=J^{r^{-1}l}\cup J^{r^{-1}l}r^{-1}\Leftrightarrow J^x=J^{r^{-1}l}$ and $y=r^{-1}$.  Note 
	that $J^x=J^{r^{-1}l} \Leftrightarrow J=J^{x^{-1}r^{-1}l}$ and this cannot happen if $N_G(J)=N_H(J)$ because $x^{-1}r^{-1}l\in G\setminus H$.
	Hence, none of the partitions $P^l(r)$ are equivalent to each other when $N_G(J)=N_H(J)$.  On the other hand, suppose  $N_G(J)\neq N_H(J)$, 
	that is, there exists $g\in G\setminus H$ with $J^g=J$.  We have $r^{-1}lg\in H$ and $J^{r^{-1}lg}={(J^g)}^{r^{-1}l}=J^{r^{-1}l}$.	 
	Therefore, $P^l(r)$ is equivalent to $P^x(y)$ where $x=r^{-1}lg$ and $y=r^{-1}$ and when we consider all the colorings corresponding to the 
	partitions of the form $P^l(r)$, we get a pairing of equivalent semiperfect colorings.
 \end{proof}

Note that if $R=\{r_i\}$ is a complete set of right coset representatives of $J$ in $G$ that are not in $H$ then for all $l\in L$, $R^l=\{{r_i}^l\}$ is a complete set of right coset representatives of $J^l$ in $G$ that are not in $H$.  Hence, for the rest of this section, we will simply take $R(l)$ to be $R^l$ for each $l\in L$.

Recall that the colorings corresponding to partitions of $G$ of the form $P^l(r^l)$ are either perfect or semiperfect.  The next theorem says when each of these situations occur.

\begin{theorem}\label{thm5}
 	Let $G$ be a group, $H$ a subgroup of $G$ of index 2, $J\leq H$, $l\in H$, and $r\in G\setminus H$.
	\begin{itemize}
		\item The coloring associated with the partition $P=\{h(J\cup Jr):h\in H\}$ of $G$ is perfect if and only if $rJ=Jr$ (or $r\in 
		N_G(J)$) and $r^2\in J$.

		\item The coloring corresponding to the partition $P=\{h(J\cup Jr):h\in H\}$ of $G$ is perfect if and only if the coloring 
		corresponding to the partition $P^l(r^l)=\{h(J^l\cup J^lr^l):h\in H\}$ of $G$ is perfect.
	\end{itemize}
\end{theorem}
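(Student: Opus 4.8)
The plan is to establish the two bullet points of Theorem~\ref{thm5} separately, proving the first directly and then deriving the second from it.

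\textbf{The criterion (first bullet).} Since $P=\{h(J\cup Jr):h\in H\}$ is a Type~I $H$-invariant partition and $[G:H]=2$, I would first reduce the perfectness condition ``$gP=P$ for all $g\in G$'' to a single equation: because $hP=P$ for every $h\in H$ and $G=H\cup rH$ with $r\in G\setminus H$, the coloring is perfect if and only if $rP=P$, equivalently $r^{-1}P=P$ (this reduction is in the spirit of the proof of Theorem~\ref{thm2}(c)). Next I would locate the block of $P$ containing $e$: as $J\le H$ while $Jr\subseteq G\setminus H$, one has $e\in h(J\cup Jr)$ precisely when $h^{-1}\in J$, so the block is $J(J\cup Jr)=J\cup Jr$. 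Performing the same computation for $r^{-1}P=\{r^{-1}h(J\cup Jr):h\in H\}$, I would use $H\unlhd G$ to rewrite $r^{-1}h=(r^{-1}hr)r^{-1}$ with $r^{-1}hr\in H$, obtaining $r^{-1}P=\{h'(r^{-1}J\cup r^{-1}Jr):h'\in H\}$; a short simplification (using $J^{-1}=J$ and $JJ=J$) shows its $e$-block is $r^{-1}J\cup r^{-1}Jr$. Since two partitions of this form are equal iff their $e$-blocks agree, $rP=P$ iff $J\cup Jr=r^{-1}J\cup r^{-1}Jr$. Comparing the parts lying in $H$ with those lying in $G\setminus H$ splits this into $J=r^{-1}Jr$ (i.e. $rJ=Jr$, so $r\in N_G(J)$) together with $Jr=r^{-1}J$; and granted $rJ=Jr$, the latter reads $rJ=r^{-1}J$, i.e. $J=r^{-2}J$, i.e. $r^{-2}\in J$, i.e. $r^2\in J$. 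Both implications follow.

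\textbf{Invariance under conjugating $(J,r)$ by $l$ (second bullet).} This I would obtain as a corollary of the first bullet. Writing $J^l=l^{-1}Jl$ and $r^l=l^{-1}rl$, conjugation by $l$ gives $r^lJ^l=l^{-1}(rJ)l$ and $J^lr^l=l^{-1}(Jr)l$, so $r^lJ^l=J^lr^l$ iff $rJ=Jr$; similarly $(r^l)^2=l^{-1}r^2l$, which lies in $l^{-1}Jl=J^l$ iff $r^2\in J$. Thus the pair of conditions characterizing perfectness for $(J,r)$ holds iff the corresponding pair for $(J^l,r^l)$ holds, and by the first bullet the coloring of $P$ is perfect iff that of $P^l(r^l)=\{h(J^l\cup J^lr^l):h\in H\}$ is. Alternatively, and without invoking the criterion, one checks the identity $J^l\cup J^lr^l=l^{-1}(J\cup Jr)l$, which exhibits $P^l(r^l)$ as the right translate of $P$ by $l$ (here $l\in H$); since left multiplication by any $g\in G$ commutes with right multiplication by $l$, the stabilizer in $G$ of $P^l(r^l)$ equals that of $P$, whence perfectness is preserved.

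\textbf{Anticipated difficulty.} There is no substantial obstacle here. The only care required is the bookkeeping in the first bullet --- correctly identifying the $e$-blocks of $P$ and of $r^{-1}P$ as $J\cup Jr$ and $r^{-1}J\cup r^{-1}Jr$ respectively, and justifying that equality of these two $H$-invariant partitions reduces to equality of those blocks --- together with the elementary case-check that, in the presence of $rJ=Jr$, the relation $Jr=r^{-1}J$ is equivalent to $r^2\in J$. Everything else is a routine application of the normality of $H$ in $G$.
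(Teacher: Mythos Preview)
Your proposal is correct and follows essentially the same route as the paper. For the first bullet the paper reduces perfectness to $rP=P$, observes that this forces $rJ\cup rJr=J\cup Jr$, and then splits into the $H$-part ($rJr=J$) and the $(G\setminus H)$-part ($rJ=Jr$), which together give $rJ=Jr$ and $r^2\in J$; your argument is the mirror image using $r^{-1}P=P$ and the $e$-block $r^{-1}J\cup r^{-1}Jr$, arriving at the same pair of conditions. For the second bullet the paper's proof is exactly your first argument (conjugation preserves both conditions of part~(a)); your alternative stabilizer argument via $J^l\cup J^lr^l=l^{-1}(J\cup Jr)l$ is a nice independent check but not needed.
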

\begin{proof}
 	\begin{itemize}
 		\item[]

		\item Since $hP=P$ $\forall h\in H$, the coloring corresponding to $P$ is perfect if and only if $rP=P$ in which case $rJ\cup rJr=
		J\cup Jr$ or $rJ=Jr$ (both being subsets of $G\setminus H$) and $rJr=J$ (both being subsets of $H$).  Now, $rJ=Jr$ and 
		$rJr=J\Leftrightarrow rJ=Jr$ and $r^2J=J$ or $r^2\in J$.

		\item This follows from (a) since $rJ=Jr\Leftrightarrow r^lJ^l=J^lr^l$ and $r^2\in J\Leftrightarrow {(r^l)}^2\in J^l$.\qedhere
 	\end{itemize}
\end{proof}

Theorem \ref{thm5}(a) implies that the number of partitions $P=\{h(J\cup Jr):h\in H\}$ of $G$, for a fixed subgroup $J$ of $H$ and where $r$ runs over a complete set of right coset representatives of $J$ in $G$ that are not in $H$, that correspond to perfect colorings is \[p(J):=(\text{the number of involutions in }N_G(J)/J)-(\text{the number of involutions in }N_H(J)/J).\]  In addition, we have $p(J)=p(J^l)$ $\forall l\in H$ by Theorem \ref{thm5}(b).  Therefore, for each representative subgroup $J$ from each conjugacy class of subgroups of $H$ in $G$, the number of inequivalent partitions $P^l(r^l)$ of $G$ that give rise to semiperfect colorings is $[H:N_H(J)]\cdot [H:J]$ if $N_G(J)=N_H(J)$ and $\frac{1}{2}[H:N_H(J)]([H:J]-p(J))$ otherwise.

To illustrate, recall that the uncolored hexagonal pattern in Figure \ref{fig1}(a) has symmetry group $G=\langle a,b\rangle$.  Choose the subgroup $H=\langle a^2,b\rangle$ of index 2 in $G$. A complete set of representative subgroups $J$ from each conjugacy class of subgroups of $H$ in $G$ is $\{H,\langle a^2\rangle, \langle b\rangle, \{e\}\}$.  We need to consider the different partitions $P^l(r^l)$ for each representative $J$ and determine if the associated coloring for each partition  will be perfect or semiperfect. The computations and corresponding results are summarized in Table \ref{Table1}.

In particular, consider the case when we choose $J=\langle b\rangle$, $l=e$, and $r=a^3$.  The coloring in Figure \ref{fig1}(c) corresponds to the partition $\{h(J\cup Jr):h\in H\}$ of $G$ and is perfect because $a^3J=Ja^3$ and ${(a^3)}^2=e\in J$.  Observe also that we obtain a pairing of equivalent semiperfect colorings when $J=\langle b\rangle$ since $\langle a^3,b\rangle=N_G(J)\neq N_H(J)=\langle b\rangle$.

\begin{tablehere}
	\begin{center}
		\begin{tabular}{|c|c|c||c|}
			\hline
			$J$ & $l$ & $r^l$ & Resulting Coloring\\\hline\hline
			$H$ & $e$ & $a$ & perfect\\\hline
			$\langle a^2\rangle$ & $e$ & $a$ & perfect\\\cline{3-4}
			& & $ab$ & perfect\\\hline
			& & $a$ & (1) semiperfect\\\cline{3-4}
			& $e$ & $a^3$ & perfect\\\cline{3-4}
			& & $a^5$ & (2) semiperfect\\\cline{2-4}
			& & $a$ & (3) semiperfect\\\cline{3-4}
			$\langle b\rangle$ & $a^2$ & $a^3$ & perfect\\\cline{3-4}
			& & $a^5$ & equivalent to (1)\\\cline{2-4}
			& & $a$ & equivalent to (2)\\\cline{3-4}
			& $a^4$ & $a^3$ & perfect\\\cline{3-4}
			& & $a^5$ & equivalent to (3)\\\hline
			& & $ab$ & perfect\\\cline{3-4}
			& & $a$ & (4) semiperfect\\\cline{3-4}
			$\{e\}$ & $e$ & $a^3b$ & perfect\\\cline{3-4}
			& & $a^3$ & perfect\\\cline{3-4}
			& & $a^5b$ & perfect\\\cline{3-4}
			& & $a^5$ & equivalent to (4)\\\hline				
		\end{tabular}
	\end{center}
	\caption{Subgroups $J$, left coset representatives $l$ of $N_H(J)$ in $H=\langle a^2,b\rangle$, and right coset representatives $r^l$ of $J^l$ 
	in $G=\langle a,b\rangle$ that should be considered to enumerate all inequivalent Type I $H$-invariant partitions of $G$} 
	\label{Table1}
\end{tablehere}

\medskip
It is convenient to use Theorem \ref{thm5}(a) to determine whether a coloring corresponding to a Type I partition of $G$ is perfect or semiperfect especially when $G$ is infinite.  Consider again the uncolored infinite repeating pattern in Figure \ref{fig2} whose symmetry group is $G=\langle a,b,x,y\rangle$.  Suppose we choose the subgroup $H=\langle b,a^2b,x,y\rangle$ of type $pmm$ and of index 2 in $G$, the subgroup $J=\langle xa^2b,xy,xy^{-1}\rangle$ of type $cm$ in $H$, and $r=a\in G\setminus H$.  Since $J$ does not contain any rotations, $a^2\notin J$.  Hence, a coloring obtained from the partition $\{h(J\cup Jr):h\in H\}$ of $G$ must be semiperfect by Theorem \ref{thm5}(a).

Theorem \ref{thm5}(a) has many immediate consequences.  For instance, if $J\unlhd G$ then a coloring induced by the partition $P=\{h(J\cup Jr):h\in H\}$ of $G$ is perfect if and only if $r^2\in J$.  In particular, if $J=\{e\}$ then $P$ corresponds to a perfect coloring if and only if $r$ is an involution.  We also conclude that when enumerating inequivalent Type I $H$-invariant partitions of $G$, there is no need to consider $J=H$ since the partition obtained will always correspond to a perfect coloring.  Also, no Type I partition of the dihedral group $D_n=\langle a,b:a^n=b^2={(ab)}^2=e\rangle$ gives rise to chirally perfect colorings (that is, semiperfect colorings where the associated color group is $\langle a\rangle$).

We conclude this section with a useful geometric consequence of Theorem \ref{thm5}(a).  It makes use of the \emph{diagram}, $D(S)$, of a set $S$ of isometries in $\mathbf{R}^n$, which is the set of symmetry elements of the isometries in $S$.

\begin{corollary}\label{cor6}
	Let $H$ be a subgroup of index 2 in $G$, $J\leq H$, and $r\in G\setminus H$.   If $rD(J)\neq D(J)$ then a coloring induced by the partition 
	$\{h(J\cup Jr):h\in H\}$ of $G$ is semiperfect.
\end{corollary}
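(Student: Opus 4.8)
The plan is to prove the contrapositive and lean entirely on Theorem \ref{thm5}(a), together with the fact noted earlier in the paper that a coloring induced by a $(Y_i,J_i)$--$H$ partition of $G$ with $[G:H]=2$ is either perfect or semiperfect. So it suffices to show that if the coloring associated with $P=\{h(J\cup Jr):h\in H\}$ is \emph{perfect}, then $rD(J)=D(J)$.

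First I would apply Theorem \ref{thm5}(a): perfectness of the coloring gives $rJ=Jr$ — equivalently $rJr^{-1}=J$, i.e. $r\in N_G(J)$. (Note that we do not even need the second condition $r^2\in J$ from that theorem.) In particular, conjugation by $r$ is a bijection of the set $J$ of isometries onto itself.

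Next I would record the equivariance property of the diagram operator $D$. For a single isometry $g$, conjugating by an isometry $r$ carries the symmetry element of $g$ (its rotation axis, mirror, center of inversion, glide line, etc.) to its $r$-image; that is, $D(\{rgr^{-1}\})=r\bigl(D(\{g\})\bigr)$, where $r$ acts on subsets of $\mathbf{R}^n$ in the obvious way. Taking the union over $g$ in a set $S$ of isometries gives $D(rSr^{-1})=rD(S)$. Applying this with $S=J$ and using $rJr^{-1}=J$ from the previous step yields $D(J)=D(rJr^{-1})=rD(J)$, contradicting the hypothesis $rD(J)\neq D(J)$. Hence the coloring cannot be perfect, so it is semiperfect.

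The only delicate point is the equivariance $D(rSr^{-1})=rD(S)$ — that is, making precise that "symmetry element of" intertwines conjugation of isometries with the geometric action on subsets of $\mathbf{R}^n$. Once this is granted (it is essentially built into the definition of $D$ recalled just before the corollary), the argument is immediate. Boundary cases — the identity in $J$ (whose symmetry element is all of $\mathbf{R}^n$, hence $r$-invariant, or is simply omitted) and any purely translational elements of $J$ — cause no trouble, since the equivariance holds trivially for them. I would also remark that the condition is only sufficient, not necessary: one can have $rD(J)=D(J)$ yet $r^2\notin J$, which by Theorem \ref{thm5}(a) still yields a semiperfect coloring.
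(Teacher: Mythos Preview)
Your proof is correct and follows essentially the same route as the paper: contrapose, invoke Theorem~\ref{thm5}(a) to get $rJr^{-1}=J$, and then use the equivariance $D(rJr^{-1})=rD(J)$ (which the paper simply cites to \cite{CFF}) to conclude $rD(J)=D(J)$. Your added remarks about boundary cases and non-necessity are fine but not needed for the argument.
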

\begin{proof}
	We know that $D(rJr^{-1})=rD(J)$ (see \cite{CFF}).  Hence, if $rD(J)\neq D(J)$ then $rJ\neq Jr$.
\end{proof}

\begin{figurehere}
	\centering{\includegraphics[width=65mm]{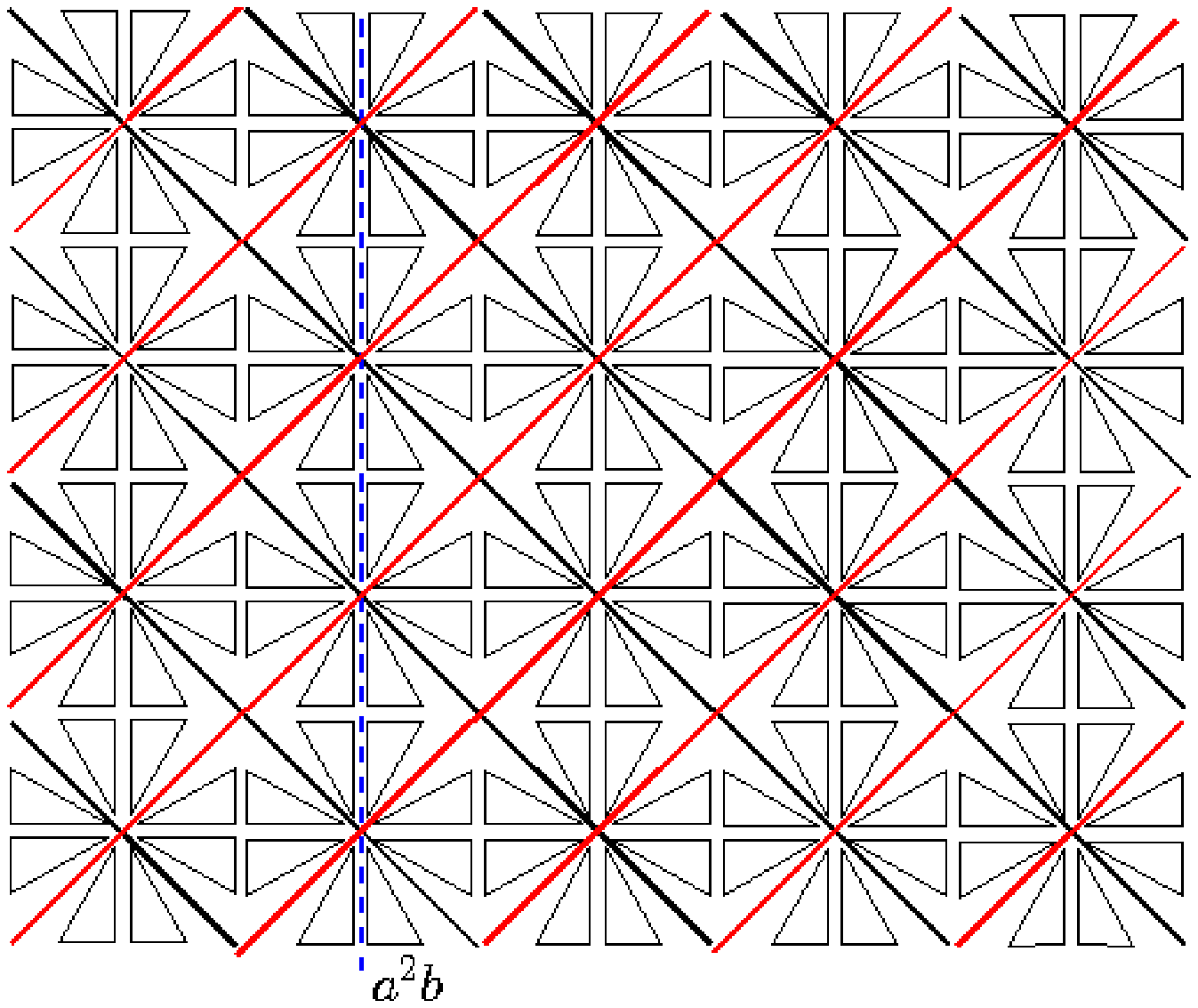}}
	
\caption{$D(J)$ and $(a^2b)D(J)$ where $J=\langle a^3b,xy,x^{-1}y\rangle$}
\label{fig4}
\end{figurehere}

\medskip For example, consider the subgroup $H=\langle xa,ab,xy,x^{-1}y\rangle$ of type $p4m$ and of index 2 in the symmetry group $G=\langle a,b,x,y\rangle$ of the pattern in Figure \ref{fig2}.  We choose $J=\langle a^3b,xy,x^{-1}y\rangle\leq H$ (of type $pm$) and $r=a^2b\in G\setminus H$.  Note that $a^2b$ is the reflection whose axis (dotted blue line) is shown in Figure \ref{fig4} and  ${(a^2b)}^2=e\in J$.  Now, $D(J)$ consists of the entire plane and the axes of reflections shown in black in Figure \ref{fig4}.  Hence, the image of $D(J)$ under $a^2b$ is the entire plane and the mirror axes shown in red in Figure \ref{fig4}.  We see that $D(J)\neq (a^2b)D(J)$ and so a coloring corresponding to the partition $\{h(J\cup Jr):h\in H\}$ of $G$ will be semiperfect by Corollary \ref{cor6}.

\section{Colorings associated with $(Y_i,J_i)-H$ partitions of $G$ for different subgroups $H$ of index 2 in $G$}

Let $H$ and $H'$ be two distinct subgroups of index 2 in $G$.  If $P$ and $P'$ are $H$- and $H'$-invariant partitions of $G$, respectively, then the colorings corresponding to $P$ and $P'$ must be inequivalent by Theorem \ref{thm1}.  Hence, we need to consider as $H$ all subgroups of index 2 in $G$ to enumerate all inequivalent partitions of $G$ that correspond to semiperfect colorings.

\noindent\begin{center}\begin{figurehere}
	\begin{minipage}[c]{40mm}
		\centering{\includegraphics[width=38mm]{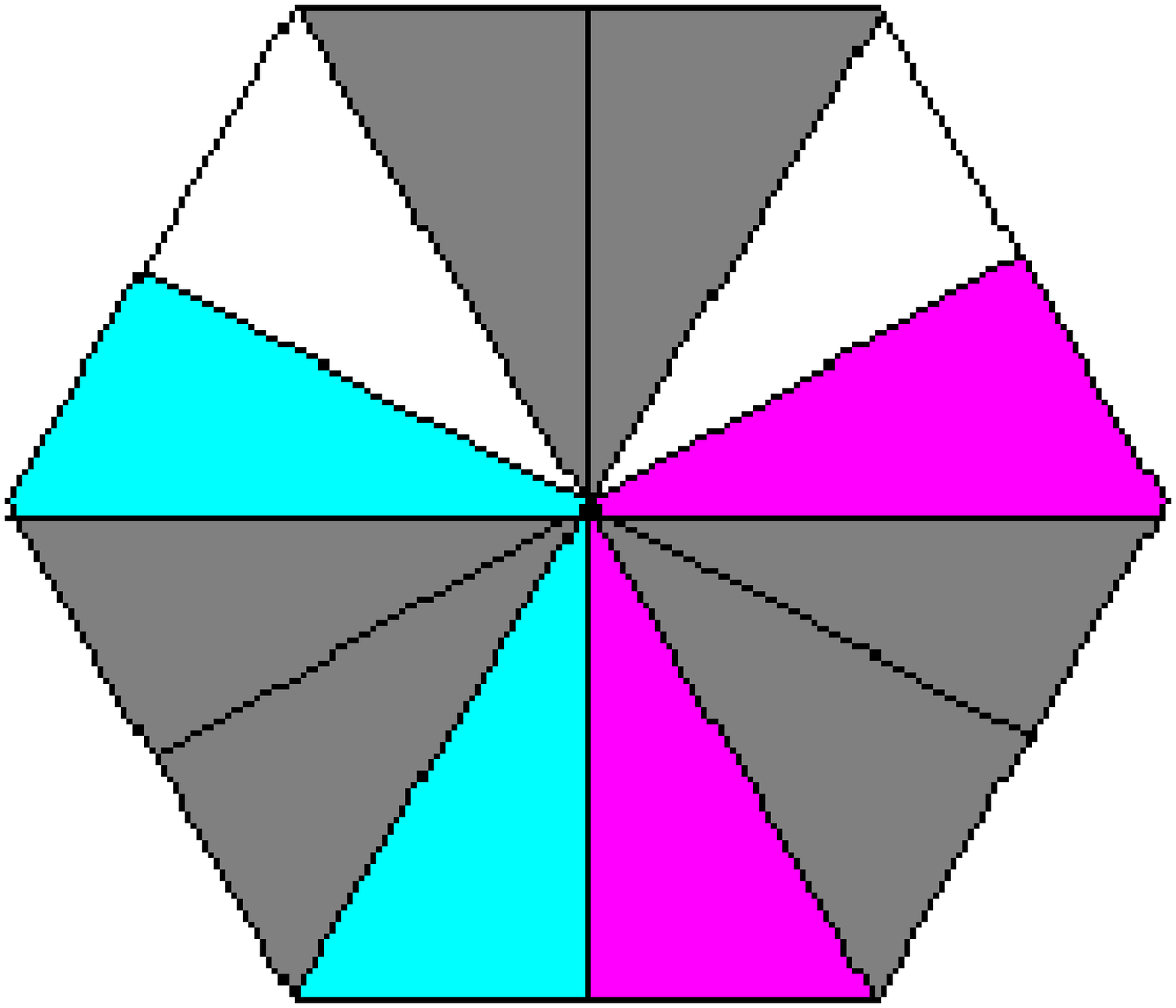}}
		
		\medskip\centering{(a)}
	\end{minipage}
	\begin{minipage}[c]{40mm}
		\centering{\includegraphics[width=38mm]{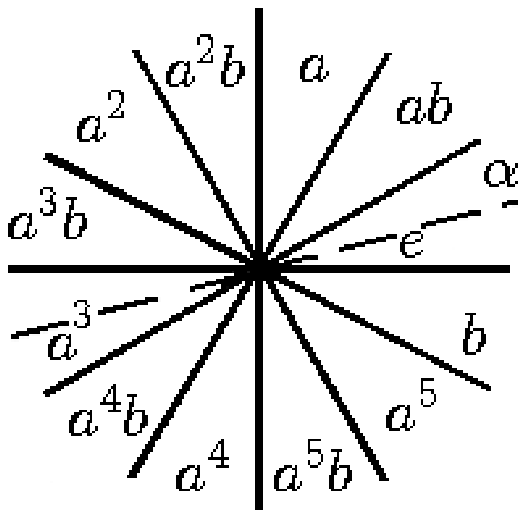}}

		\medskip\centering{(b)}
	\end{minipage}
\caption{(a) A semiperfect coloring inequivalent to Figure \ref{fig1}(b) ; (b) The reflection $\alpha\in N(G)$ and corresponding labels of fundamental domains of $G=\langle a,b\rangle$}
\label{fig5}
\end{figurehere}\end{center}

\medskip Recall that the color group associated to the semiperfect coloring in Figure \ref{fig1}(b) is $H=\langle a^2,b\rangle$.  The coloring of the same hexagonal pattern in Figure \ref{fig5}(a) is also semiperfect since its associated color group $H'=\langle a^2,ab\rangle$ is of index 2 in $G$.  Since the associated color groups of the semiperfect colorings in Figure \ref{fig1}(b) and Figure \ref{fig4}(a) are different, they are not equivalent by Theorem \ref{thm1}.  This is also evident geometrically because the color patterns in the two colorings are not congruent.

However, if one looks closely, even though the colorings in Figures \ref{fig1}(b) and \ref{fig5}(a) are inequivalent, they seem to be ``similar'' in some respects.  This phenomenon was noted by Macdonald and Street in \cite{MacS} and was considered by Roth in \cite{Ro1}, \cite{Ro2}, and \cite{Ro3}.

We will show that even if the colorings in Figures \ref{fig1}(b) and \ref{fig5}(a) are inequivalent, one may be obtained from the other by considering the images of the fundamental domains of $G$ under some element $\alpha$ of the normalizer of $G$, $N(G)$, in the group of isometries of $\mathbf{R}^n$.  To achieve this, we make use of the next result.

\begin{theorem}\label{thm7}
	Let $H$, $H'$ be subgroups of index 2 in $G$ with $H'=H^{\alpha}$ for some $\alpha\in N(G)$.
	\begin{itemize}
		\item If $D$ is a fundamental domain of $G$ and $\alpha (D)=D$ then the action of $\alpha$ on
		$\{g(D):g\in G\}$ is equivalent to the action of $\alpha$ on $G$ by conjugation.
			
		\item If $P$ and $P'$ are $(Y_i,J_i)-H$ and $({Y_i}^{\alpha},{J_i}^{\alpha})-H'$ partitions of $G$, respectively,
		then $P'=P^{\alpha}$.  Moreover, a coloring corresponding to $P$ is perfect if and only if a coloring corresponding
		to $P'$ is perfect.

		\item If $C$ and $C'$ are the sets of colors in colorings corresponding to $(Y_i,J_i)-H$ and $({Y_i}^{\alpha},{J_i}^{\alpha})-H'$ 
		partitions of $G$, respectively, then the action of $H$ on $C$ is equivalent to the action of $H'$ on $C'$.
	\end{itemize}
\end{theorem}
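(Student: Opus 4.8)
The three parts all rest on one observation, which I would record first. Since $\alpha\in N(G)$, conjugation by $\alpha$ is an automorphism of $G$; writing $g^{\alpha}=\alpha^{-1}g\alpha$ as in the rest of the paper (so that $H^{\alpha}=H'$) and extending this to subsets via $S^{\alpha}=\{s^{\alpha}:s\in S\}$, the map $g\mapsto g^{\alpha}$ is a bijection of $G$ onto itself that carries subgroups to subgroups, carries the index‑$2$ subgroup $H$ onto $H'$, permutes the set of partitions of $G$, and, crucially, satisfies $(gS)^{\alpha}=g^{\alpha}S^{\alpha}$ for $g\in G$ and $S\subseteq G$; hence it intertwines the left‑multiplication action of $G$ on itself. (``The action of $\alpha$ on $G$ by conjugation'' in part~(a) is the companion automorphism $g\mapsto\alpha g\alpha^{-1}$; it is the inverse of $g\mapsto g^{\alpha}$ and everything said below applies to it equally.)

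For part~(a): the pattern is the $G$‑orbit of the fundamental domain $D$ with trivial stabilizers, so $g\leftrightarrow g(D)$ is a bijection $G\to\{g(D):g\in G\}$. If $\alpha(D)=D$, then also $\alpha^{-1}(D)=D$, and I would compute $\alpha\bigl(g(D)\bigr)=\alpha g\alpha^{-1}(D)=(\alpha g\alpha^{-1})(D)$, where $\alpha g\alpha^{-1}\in G$ because $\alpha\in N(G)$. Thus, transported through $g\leftrightarrow g(D)$, the geometric permutation of the fundamental domains induced by $\alpha$ is exactly $g\mapsto\alpha g\alpha^{-1}$, i.e.\ conjugation by $\alpha$. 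I would emphasize that the hypothesis $\alpha(D)=D$ is precisely what makes the relabelling of domains coincide with conjugation rather than with conjugation composed with some fixed extra permutation.

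For part~(b): apply $g\mapsto g^{\alpha}$ to $P=\{hJ_iY_i:i\in I,\ h\in H\}$ elementwise, so that $P^{\alpha}=\{h^{\alpha}J_i^{\alpha}Y_i^{\alpha}:i\in I,\ h\in H\}=\{h'J_i^{\alpha}Y_i^{\alpha}:i\in I,\ h'\in H'\}$, since $h\mapsto h^{\alpha}$ maps $H$ onto $H'$. To recognise this as the $(Y_i^{\alpha},J_i^{\alpha})$–$H'$ partition I must check that $Y^{\alpha}=\bigcup_i Y_i^{\alpha}$ is a complete set of right coset representatives of $H'$ in $G$: conjugating $G=\bigsqcup_{y\in Y}Hy$ by $\alpha$ gives $G=G^{\alpha}=\bigsqcup_{y\in Y}H^{\alpha}y^{\alpha}=\bigsqcup_{y\in Y}H'y^{\alpha}$, as needed. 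Together with $J_i^{\alpha}\le H'$ this gives $P'=P^{\alpha}$. For the ``perfect iff perfect'' clause, the stabilizer of $P'=P^{\alpha}$ under left multiplication is the $\alpha$‑conjugate of the stabilizer of $P$ (by the intertwining property), hence has the same index in $G$; equivalently, that stabilizer is $H$ or $G$ by index $2$, and $g\mapsto g^{\alpha}$ sends $H$ to $H'$ and $G$ to $G$. Since a coloring is perfect exactly when the stabilizer is all of $G$, $P$ gives a perfect coloring iff $P'$ does. I expect this coset‑representative bookkeeping, and keeping the left/right and $\alpha$ versus $\alpha^{-1}$ conventions aligned between the geometric statement (a) and the algebraic statement (b), to be the only real obstacle.

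For part~(c): the colors of the coloring from $P$ are in bijection with the blocks of $P$, with $h\in H$ acting by $B\mapsto hB$, and likewise for $P'$ and $H'$. Take $\phi=(\,\cdot\,)^{\alpha}|_H\colon H\to H'$, an isomorphism by the opening observation, and let $\psi\colon C\to C'$ be the bijection sending the block $B$ of $P$ to the block $B^{\alpha}$ of $P^{\alpha}=P'$; this is well defined and bijective because $g\mapsto g^{\alpha}$ carries the partition $P$ bijectively onto the partition $P'$ (using part~(b)). Then $\psi(h\cdot B)=(hB)^{\alpha}=h^{\alpha}B^{\alpha}=\phi(h)\cdot\psi(B)$, so $(\phi,\psi)$ realizes an equivalence between the action of $H$ on $C$ and the action of $H'$ on $C'$.
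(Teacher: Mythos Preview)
Your proof is correct and follows the same overall route as the paper: in (a) you use the bijection $g\leftrightarrow g(D)$ and the computation $\alpha(g(D))=(\alpha g\alpha^{-1})(D)$ exactly as the paper does; in (b) you verify $P^{\alpha}=P'$ by the same elementwise conjugation and coset-representative check; and in (c) you build the same equivariant bijection $(\phi,\psi)$ between the $H$-set $P$ and the $H'$-set $P'$.

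The one genuine difference is in the ``perfect iff perfect'' clause of (b). The paper splits into Type~I and Type~II cases and invokes the explicit criteria from Theorem~5(a) and the condition $J_2={J_1}^{y}$, checking that each is preserved under conjugation by $\alpha$. Your argument instead observes directly that the stabilizer of $P^{\alpha}$ is the $\alpha$-conjugate of the stabilizer of $P$, hence has the same index in $G$. This is cleaner and more general: it does not depend on the particular structure of $(Y_i,J_i)$-$H$ partitions or on the index being $2$, whereas the paper's argument ties the statement back to the specific perfectness criteria developed earlier. Both are valid; yours is shorter and would apply verbatim to partitions with $[G:H]>2$.
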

\begin{proof}
	\begin{itemize}
		\item[]

		\item There is a bijection between $G$ and the set $\{g(D):g\in G\}$ given by $g\leftrightarrow g(D)$.  Hence,
		$g^{\alpha}\in G$ is in a one-to-one correspondence with $g^{\alpha}(D)=\alpha(g(D))$ since $\alpha$, and hence $\alpha^{-1}$,
		stabilizes $D$.

		\item Note that if $J_i\leq H$ then ${J_i}^{\alpha}\leq H'$ for $i\in I$.  Also, if $Y=\{e,y\}$ is a complete set of right coset 
		representatives of $H$ in $G$, then $Y'=\{e,y^{\alpha}\}$ is a complete set of right coset representatives of $H'$ in $G$.  Hence,
		we obtain
		\begin{alignat*}{2}
			P^{\alpha}&=\{(\alpha h\alpha^{-1})(\alpha J_i\alpha^{-1})\{e,\alpha y\alpha^{-1}\}:i\in I, h\in H\}\\
			&=\{h'{J_i}^{\alpha}\{e,y^{\alpha}\}:i\in I,h'\in H'\}\\
			&=P'.
		\end{alignat*}
		If $P$ is a Type I partition, then by Theorem \ref{thm5}(a), either both $P$ and $P'=P^{\alpha}$ correspond to perfect colorings or 
		both correspond to semiperfect colorings since $Jy=yJ \Leftrightarrow J^{\alpha}y^{\alpha}=y^{\alpha}J^{\alpha}$ and
		$y^2\in J\Leftrightarrow {(y^{\alpha})}^2\in J^{\alpha}$.  The same result holds even if $P$ is of Type II because 
		$J_2={J_1}^y\Leftrightarrow {J_2}^{\alpha}={({J_1}^{\alpha})}^{y^{\alpha}}$.

		\item The elements of $C$ and $C'$ are in a one-to-one correspondence with the elements of the partitions $P=\{hJ_iY_i:i\in I,h\in
		H\}$ and $P'=\{h'J_i'Y_i':i\in I, h'\in H'\}$ of $G$, respectively, where $J_i'={J_i}^{\alpha}$ and $Y_i'={Y_i}^{\alpha}$ for $i\in 
		I$.  Since $P'=P^{\alpha}$, there is a bijection $f$ from $P$ to $P'$ given by $f(hJ_iY_i):=\alpha(hJ_iY_i)\alpha^{-1}
		=h^{\alpha}J_i'Y_i'$.  Let $\phi$ be the isomorphism from $H$ to $H'$ given by $\phi(h)=h^{\alpha}$.  The groups $H$ and $H'$ act on 
		$P$ and $P'$, respectively, by left multiplication, and hence $\forall g\in H$, $\phi(g)\cdot 
		f(hJ_iY_i)=g^{\alpha}(h^{\alpha}J_i'Y_i')=f(g\cdot hJ_iY_i)$.\qedhere
	\end{itemize}
\end{proof}	

Suppose $H'=H^{\alpha}$ for some $\alpha\in N(G)$.  Theorem \ref{thm7}(b) tells us that $(Y_i',J_i')-H'$ partitions of $G$ can be obtained by conjugating the $(Y_i,J_i)-H$ partitions of $G$ by $\alpha$. If $\alpha$ is chosen such that it stabilizes a fundamental domain $D$ of $G$, then by Theorem \ref{thm7}(a), not only can we obtain the colorings corresponding to $(Y_i',J_i')-H'$ partitions of $G$ from the colorings corresponding to $(Y_i,J_i)-H$ partitions of $G$ analytically by looking at the partitions, but also geometrically from the colorings.  That is, if $\alpha(D)=D$, then a coloring associated to the $({Y_i}^{\alpha},{J_i}^{\alpha})-H'$ partition of $G$ may be obtained from a coloring associated to the $(Y_i,J_i)-H$ partition of $G$ in the following manner:  If $R$ is the object of the pattern contained in $D$ then for all $g\in G$,
\begin{enumerate}
	\item associate the color of $g(R)$, in a coloring corresponding to the $(Y_i,J_i)-H$ partition of $G$, to $g(D)$;

	\item determine $\alpha(g(D))$;

	\item designate the color of $g(D)$ as the new color of $\alpha(g(D))$;

	\item assign to each $g^{\alpha}(R)$ the new color of $\alpha(g(D))$ and we obtain a coloring corresponding to the 
	$({Y_i}^{\alpha},{J_i}^{\alpha})-H'$  
	partition of $G$.
\end{enumerate}
Note that by Theorem \ref{thm7}(b), the two colorings that we get are either both perfect or both semiperfect.

Finally, even if the colorings induced by the $(Y_i',J_i')-H'$ partitions of $G$ are not equivalent to any of the colorings induced by the $(Y_i,J_i)-H$ partitions of $G$, Theorem \ref{thm7}(c) tells us that the color permutation groups that we obtain for both sets of colorings are isomorphic.  We illustrate all of these in the following examples.

Consider again the uncolored hexagonal pattern in Figure \ref{fig1}(a) whose symmetry group is $G=\langle a,b\rangle$.  If we denote by $c$ the $30^{\circ}$-counterclockwise rotation about the center of the hexagon, then $N(G)=\langle c,b\rangle\cong D_{12}$.

Let $\alpha=cb\in N(G)$ be the reflection along the angle bisector of the angle formed by the axes of the reflections $b$ and $ab$ (see Figure \ref{fig5}(b)).  Observe that $\alpha$ stabilizes the fundamental domain labeled ``$e$'' in Figure \ref{fig5}(b).  Also, the subgroups $H=\langle a^2,b\rangle$ and $H'=\langle a^2, ab\rangle$ of index 2 in $G$ are conjugate subgroups in $N(G)$ with $H'=H^{\alpha}$.

Recall that we obtained the coloring in Figure \ref{fig1}(b) by considering the partition 
	\begin{alignat*}{2}
		P&=\{h\langle a^2b\rangle\{e\}:h\in H\}\cup\{hH\{a^3\}:h\in H\}\\
		&=\{\{e,a^2b\},\{b,a^4\},\{a^4b,a^2\},\{ab, a, a^3b,a^3, a^5b, a^5\}
	\end{alignat*}
of $G$.  Now, consider the partition 
	\begin{alignat*}{2}
		P'&=\{h'{\langle a^2b\rangle}^\alpha\{e\}:h'\in H'\}\cup\{h'H^{\alpha}\{{(a^3)}^{\alpha}\}:h'\in H'\}\\
		&=\{h'\langle a^5b\rangle\{e\}:h'\in H'\}\cup\{h'H'\{a^3\}:h'\in H'\}\\
		&=\{\{e,a^5b\},\{ab,a^2\},\{a^3b,a^4\},\{a, a^2b, a^3, a^4b, a^5, b\}\} 
	\end{alignat*}	
of $G$. We see that Figure \ref{fig5}(a) is a coloring associated to $P'$.  Hence, we are able to obtain the coloring in Figure \ref{fig5}(a) from the coloring in Figure \ref{fig1}(b) by computing their corresponding partitions of $G$, as stated in Theorem \ref{thm7}(b).  Moreover, both colorings are semiperfect.

If we consider the images of the fundamental domain labeled ``$e$'' by elements of $G$, then we get the labelling of the fundamental domains as shown in Figure \ref{fig5}(b).  We now discuss how we can transform the coloring in Figure \ref{fig1}(b) to the coloring in Figure \ref{fig5}(a).  We associate the colors in the coloring in Figure \ref{fig1}(b) to their corresponding fundamental domains.  Getting the image of each fundamental domain under $\alpha$, we obtain a new assignment of colors to the fundamental domains (see Table \ref{Table2}).  Associating the new colors designated to each fundamental domain to the corresponding tile of the pattern, and changing the color yellow to purple, green to white, blue to light blue, and red to gray, we then obtain the coloring in Figure \ref{fig5}(a).

\begin{tablehere}
	\begin{center}
		\begin{tabular}{|c||c|c|c|}
			\hline
			Fundamental & Original & Image & New\\
			domain & color & under $\alpha$ & color\\\hline\hline
			$e$ & yellow & $e$ & yellow\\\hline
			$ab$ & red & $b$ & green\\\hline
			$a$ & red & $a^5$ & red\\\hline
			$a^2b$ & yellow & $a^5b$ & red\\\hline
			$a^2$ & blue & $a^4$ & green\\\hline
			$a^3b$ & red & $a^4b$ & blue\\\hline
			$a^3$ & red & $a^3$ & red\\\hline
			$a^4b$ & blue & $a^3b$ & red\\\hline
			$a^4$ & green & $a^2$ & blue\\\hline
			$a^5b$ & red & $a^2b$ & yellow\\\hline
			$a^5$ & red & $a$ & red\\\hline
			$b$ & green & $ab$ &red\\\hline
		\end{tabular}
	\end{center}
	\caption{Original color, image under $\alpha$, and new color of the fundamental domains of $G=\langle a,b\rangle$}
	\label{Table2}
\end{tablehere}

\medskip Denote the colors blue, green, red, yellow by 1, 2, 3, 4, respectively, and the colors light blue, white, gray, purple by 1', 2', 3', 4', respectively.  Then the set of colors of the coloring in Figure \ref{fig1}(b) is $C=\{1,2,3,4\}$ and the set of colors of the coloring in Figure \ref{fig5}(a) is $C'=\{1',2',3',4'\}$.  Table \ref{Table3} gives the color permutations induced by the elements of $H$ and $H'$ on their corresponding colorings.  Clearly, the action of $H$ on $C$ is equivalent to the action of $H'$ on $C'$.

\begin{tablehere}
	\begin{center}
		\begin{tabular}{|c|c||c|c|}
 			\hline
			$h\in H$ & Color & $h^{\alpha}\in H'$ & Color \\
			& permutation && permutation\\\hline\hline
			$e$ & (1) & $e$ & (1')\\\hline
			$a^2$ & (124) & $a^4$ & (1'2'4')\\\hline
			$a^4$ & (142) & $a^2$ & (1'4'2')\\\hline
			$b$ & (24) & $ab$ & (2'4')\\\hline
			$a^2b$ & (12) & $a^5b$ & (1'2')\\\hline
			$a^4b$ & (14) & $a^3b$ & (1'4')\\\hline
		\end{tabular}
	\end{center}
	\caption{Color permutations corresponding to the action of the elements of $H$ and $H'$ on $C$ and $C'$, respectively}
	\label{Table3}
\end{tablehere}

\medskip We also give an example for the infinite repeating pattern in Figure \ref{fig2} whose symmetry group is $G=\langle a,b,x,y\rangle$. Figure \ref{fig6}(a) shows the fundamental domains of $G$.  Let $\alpha\in N(G)$ be the reflection along the broken line in Figure \ref{fig6}(a).  Note that $\alpha$ stabilizes the fundamental domain labeled ``$e$''.  The groups $H=\langle a,ab,xy,x^{-1}y\rangle$ and $H'=\langle
xa,ab,xy,x^{-1}y\rangle$ are both subgroups of index 2 in $G$ and of type $p4m$ with $H'=H^{\alpha}$.  Choose $J=\langle a^2b,b,x^2,y^2\rangle\leq H$ (of type $pmm$) and $Y=\{e,xab\}$.  Since ${(xab)}^2=xy\notin J$, the coloring induced by the $(Y,J)-H$ partition of $G$ in Figure \ref{fig6}(b) is semiperfect by Theorem \ref{thm5}(a).  A semiperfect coloring corresponding to the  $(Y^{\alpha},J^{\alpha})-H'$ coloring of $G$ is shown in Figure \ref{fig6}(c).

\noindent\begin{figurehere}
\centering{\includegraphics[width=70mm]{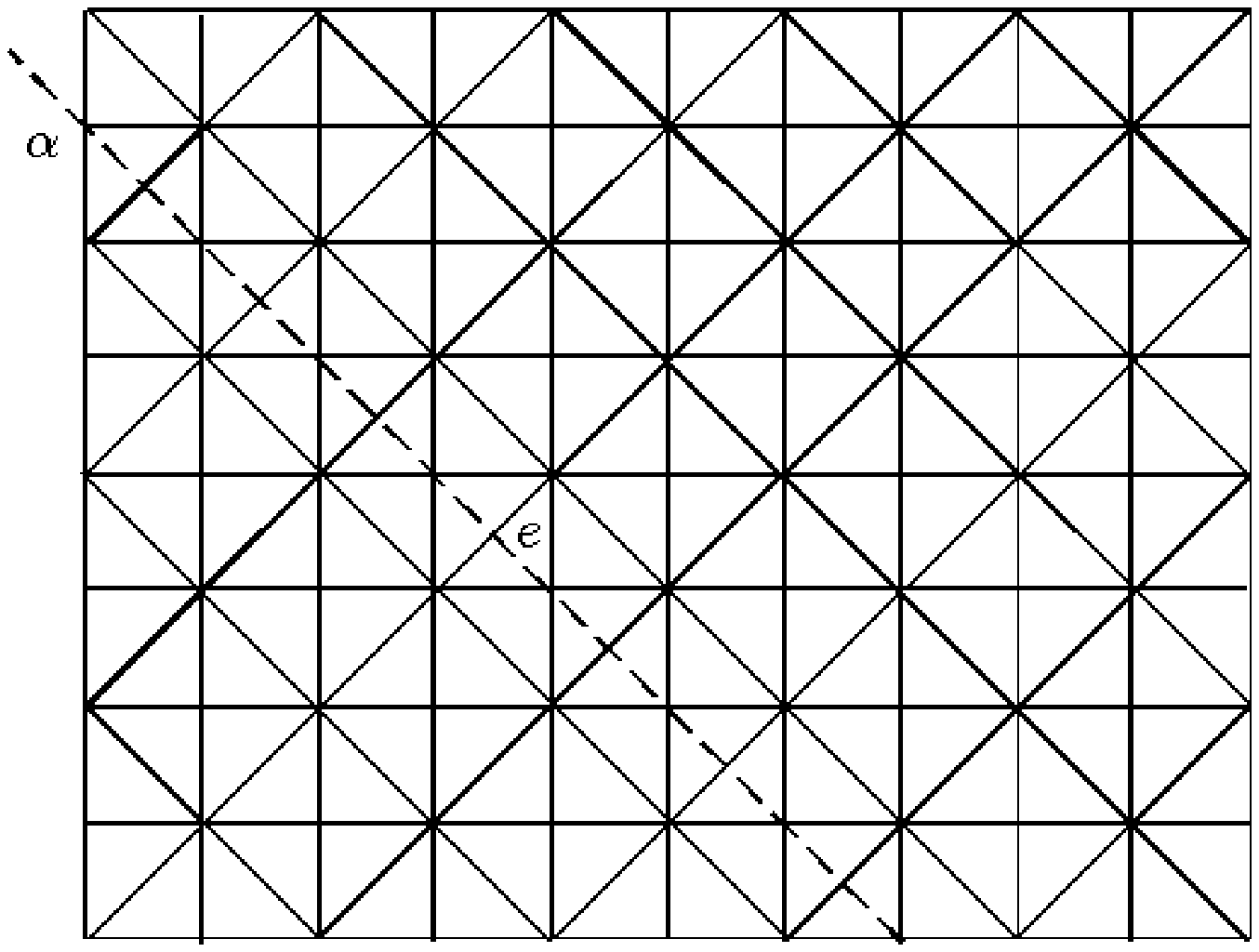}}

\centering{(a)}

	\begin{minipage}[c]{50mm}
		\centering{\includegraphics[width=50mm]{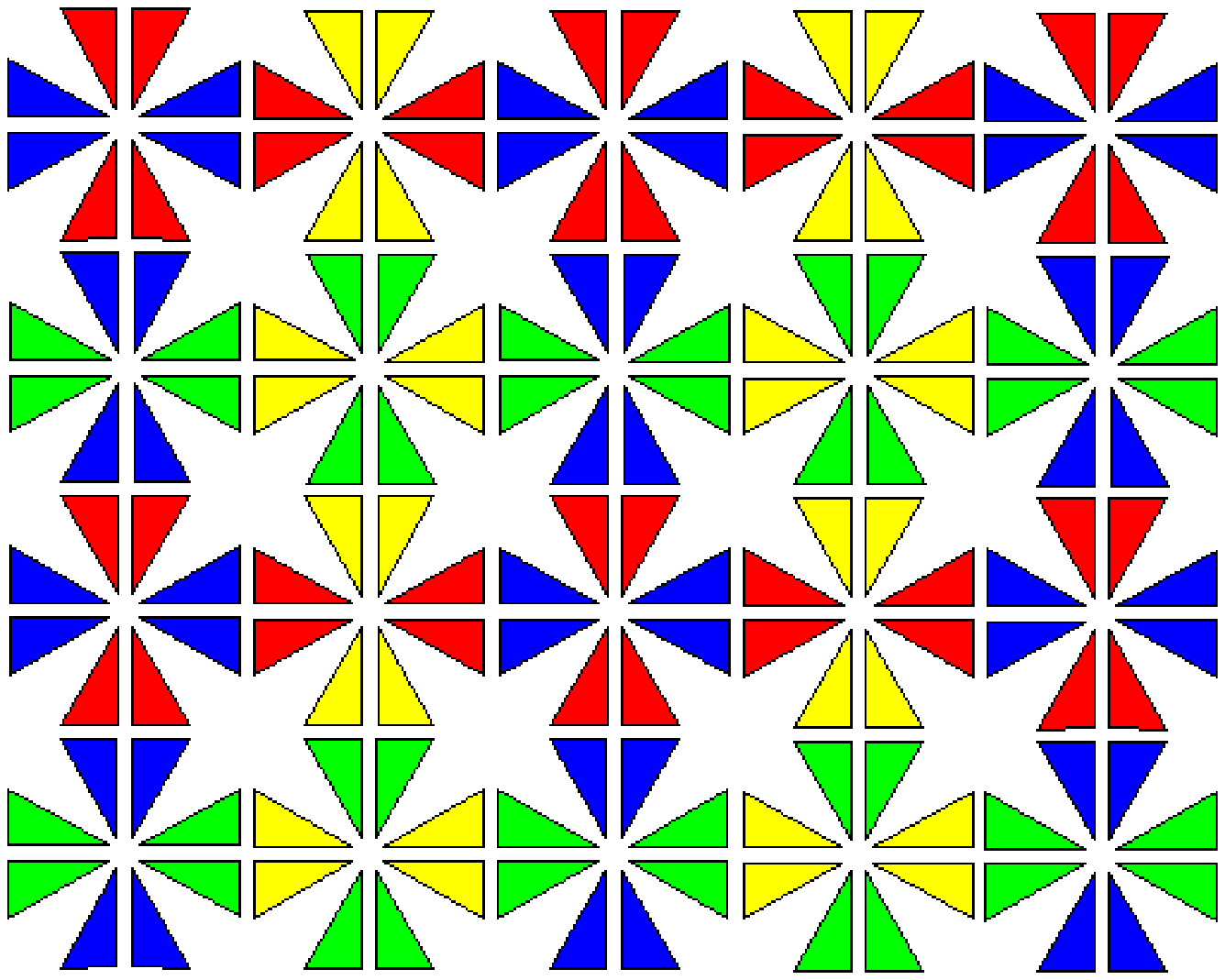}}
		
		\centering{(b)}
	\end{minipage}\quad
	\begin{minipage}[c]{50mm}
		\centering{\includegraphics[width=50mm]{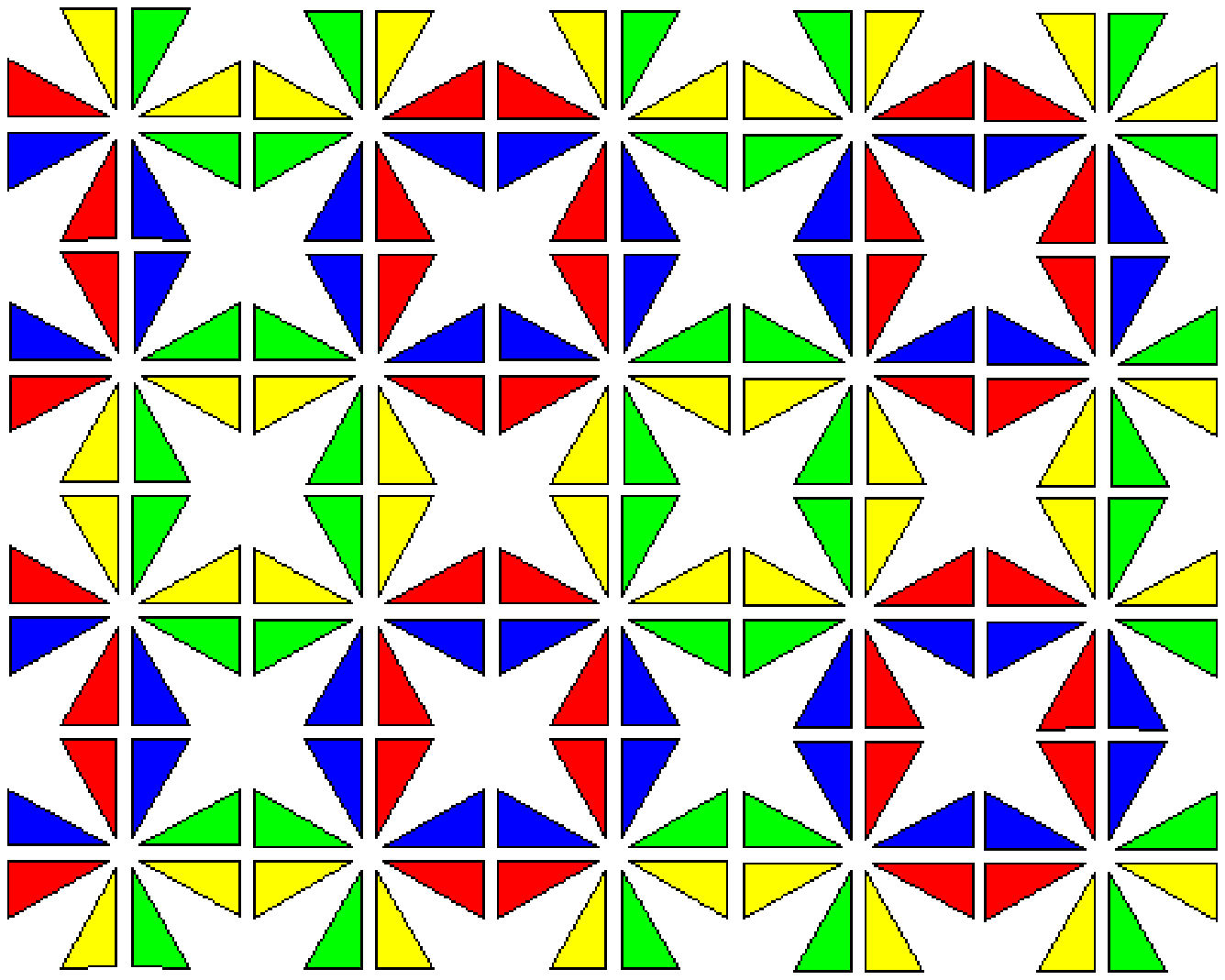}}

		\centering{(c)}
	\end{minipage}
\caption{(a) Fundamental domains of $G=\langle a,b,x,y\rangle$ of type $p4m$ and the reflection $\alpha\in N(G)$ ; (b) Semiperfect coloring
induced by the $(Y,J)-H$ partition of $G$, where $H=\langle a,ab,xy,x^{-1}y\rangle$ of type $p4m$, $J=\langle a^2b,b,x^2,y^2\rangle$ of type $pmm$, and $Y=\{e,xab\}$ ; (c) Semiperfect coloring induced by the $(Y^{\alpha},J^{\alpha})-H^{\alpha}$ partition of $G$}
\label{fig6}
\end{figurehere}

\medskip Therefore, to enumerate the semiperfect colorings of the hexagonal pattern in Figure \ref{fig1}(a), we may consider as $H$ only the subgroups $\langle a^2,b\rangle$ and $\langle a\rangle$ of $G$.  The resulting 25 inequivalent semiperfect colorings of the hexagonal pattern can be seen in \cite{L}.  In the case of infinite repeating patterns, we expect that there will be infinitely many semiperfect colorings.  However, one may impose certain restrictions on the colorings, such as the number of colors.  For instance, the 44 inequivalent semiperfect colorings of the infinite repeating pattern in Figure \ref{fig2} with at most four colors, one orbit of colors, and associated color group of type $p4m$, are listed in \cite{L}.

\section{Conclusion and Outlook}
In this paper, we considered semiperfect colorings of symmetrical patterns where the objects to be colored in the pattern are in a one-to-one correspondence with the elements of the symmetry group $G$ of the pattern.  In order to enumerate all inequivalent semiperfect colorings of the pattern, we looked at the different $(Y_i,J_i)-H$ partitions of $G$ where $H$ is a subgroup of index 2 in $G$.  We provided an organized and efficient method of identifying and counting the inequivalent $(Y_i,J_i)-H$ partitions of $G$ that correspond to semiperfect colorings.  Moreover, inequivalent semiperfect colorings whose associated color groups are conjugate subgroups with respect to $N(G)$ were related by considering the corresponding partitions and the images of the fundamental domains of $G$ under some suitable element of $N(G)$.

Unfortunately, not all colorings of symmetrical patterns correspond to a partition of the symmetry group of the pattern.  The next step would be to determine how to enumerate inequivalent semiperfect colorings of such patterns, examples of which are the Archimedean and hyperbolic tilings.  Adapting results in this paper to the framework discussed in \cite{DLPFL} might be effective in achieving this goal.

Semiperfect colorings is just a small part of the broader theory of nonperfect colorings.  One might look at the general case, that is, how to obtain all inequivalent colorings of a given symmetrical pattern whose associated color group is of index $n$ in the symmetry group of the pattern.

Lastly, it might be interesting to explore further colorings that are not really equivalent and yet one can be obtained from the other by a symmetry in the normalizer of the symmetry group of the pattern in the group of isometries.

\medskip\noindent\small\emph{Acknowledgments}.  The second author gratefully acknowledges the financial assistance given by the University of the Philippines HRDO. Part of this work was carried out by the second author during his stay at the FSPM, University of Bielefeld, Germany.

\end{document}